\documentclass[12pt,reqno]{amsart}
\usepackage[dvipdfmx]{graphicx, color} 
\usepackage{mathrsfs} 
\usepackage{amssymb, amsmath, amsthm, amscd}
\usepackage{tikz}
 
\theoremstyle{plain} 
\newtheorem{theorem}{Theorem}[section]
\newtheorem{lemma}[theorem]{Lemma}
\newtheorem{corollary}[theorem]{Corollary} 
\newtheorem{proposition}[theorem]{Proposition} 

\theoremstyle{definition} 
\newtheorem{definition}{Definition}[section]

\newtheorem{example}[definition]{Example}

\theoremstyle{remark} 
\newtheorem{remark}{Remark}[section] 

\newtheorem{problem}[remark]{Problem}

\numberwithin{equation}{section}

\begin{document}

%
%

\title{%
	On subgroups of Brin-Thompson groups  $nV$ 
}

\author[S. Kojima]{%
    Sadayoshi Kojima
}
\address{%
	Tokyo Institute of Technology, 
	Professor Emiritus 
} 
\email{%
        sadayosi@me.com
}
\author[X. Sheng]{%
	Xiaobing Sheng
} 
\address{%
	The University of Osaka  
}
\email{%
	sheng.xiaobing.yke@osaka-u.ac.jp
}
\subjclass[2020]{%
	Primary 20F65, Secondary 20E07, 37B10. 
}
\keywords{%
	Brin-Thompson group, 
	subgroup embedding, 
	torsion subgroups, 
	torsion locally finite, 
	proper action on $\rm CAT(0)$ spaces    
}
\thanks{%
} 

\begin{abstract} 
	We prove that the Brin-Thompson group  $nV$  is 
	torsion locally finite for  $ n \geq 1$  which is known 
	only when  $n = 1$, 
	and  $nV$  contains continuum many copies of
	the additive group of the rationals  $\mathbb{Q}$  for  $n \geq 2$  which is known 
	to be false for the  $n = 1$ case.  
\end{abstract}

\maketitle

%
%
\section{Introduction}\label{Sec:Introduction}   

Thompson's groups $F$, $T$ and $V$  
were first introduced by Richard J. Thompson in the 1960s 
and their unusual properties have made them a good source of counterexamples 
to many conjectures in group theory.
Various generalisations of these groups have been studied in more recent years, 
including the higher dimensional Thompson groups 
(known as the Brin-Thompson groups) 
defined by Brin \cite{Brin} in 2004. 
Roughly speaking, the Brin-Thompson group, denoted by  $nV,$ 
comprises self-homeomorphisms of the product $\mathcal{C}^{n}$ of $n$ copies 
of the Cantor set $\mathcal{C}$ 
which send dyadic block decompositions of  $\mathcal{C}^n$  to other ones 
piecewise affinely.  

This paper is motivated by Burillo, Cleary and R\"over's survey \cite{BCR}  
on the subgroups of Thompson's group $V$.  
Among others, 
they presented two results on  $V$,  
one is torsion locally finiteness proved by R\"over in  \cite{R, Rover}, 
and the other is the non-existence of roots of arbitrary large order, 
also previously shown by Higman in  \cite{Higman}.   
We study these properties for the higher dimensional version $nV$ 
from a more combinatorial perspective.  

Recall that a group is torsion locally finite if  
all of its finitely generated torsion subgroups are finite.  
The property of torsion groups being finite seems to be generic 
as Burnside proposed as a problem in 1902, 
but this was proved to be false. 
While this slightly different version of the property, torsion locally finite, 
holds for many groups including hyperbolic groups and 
some small cancellation groups.  
However, there exist exotic examples such as Burnside groups 
and the Grigorchuk groups  \cite{Grigorchuk}  
that do not have this property.

In this paper, 
we first prove that  $nV$  is not exceptional,
namely, 
 
\begin{theorem}\label{Thm:Main1} 
	$nV$  is torsion locally finite for  $n \geq 1$.  
\end{theorem}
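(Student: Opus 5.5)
The plan is to adapt Röver's argument for $V$ to $nV$. The key fact is that a torsion element of $nV$ is conjugate to a permutation of the blocks of some dyadic block decomposition of $\mathcal{C}^n$. For a finitely generated torsion subgroup $G=\langle g_1,\dots,g_k\rangle$, we will build one decomposition $\mathcal{P}$ of $\mathcal{C}^n$ into finitely many dyadic blocks such that every element of $G$ permutes the blocks of $\mathcal{P}$, acting affinely (indeed as a translation in normalized coordinates) on each block. Then $G$ embeds into a finite group: the symmetric group on the blocks of $\mathcal{P}$, since an element acting trivially on the blocks acts as the identity on each block. Hence $G$ is finite.

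The first step is the single-element case. Let $g\in nV$ have order $m$. Choose a dyadic block decomposition $\mathcal{D}$ on which $g$ is affine on each block. The common refinement of the pullbacks $g^{-i}\mathcal{D}$ for $0\le i<m$ is a $g$-invariant partition on whose pieces all powers of $g$ act affinely. This refinement should be taken in the $n$-dimensional "dyadic pattern" sense, so that each piece is again a dyadic block. This is the point where $n\ge 2$ differs from $n=1$. Images of dyadic blocks under elements of $nV$ are dyadic blocks, and the intersection of two dyadic blocks is empty or a dyadic block. So the refinement can be chosen to consist of dyadic blocks, and $g$ permutes them affinely. Composing around a cycle of length $\ell$ gives an affine map of a block to itself of finite order. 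An affine self-map of a dyadic block in $nV$ has the form $x\mapsto x$ after normalization, since the scaling factors are powers of $2$ and finite order forces them to be $1$. Hence $g^\ell$ is the identity on that block.

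The second step, which I expect to be the main obstacle, is the finitely generated case. The problem is that the refinements needed for products $g_{i_1}\cdots g_{i_r}$ could become arbitrarily fine. I will follow Röver's idea of bounding the "depth". For each point $x\in\mathcal{C}^n$, consider the germ of $G$ at $x$. Every element of $G$ is torsion, so the germ of $h\in G$ at a fixed point $x$ is a finite-order affine germ, hence trivial. Therefore the stabilizer of $x$ acts with trivial germs, and the orbit map is locally injective. Next I show that every orbit $Gx$ is finite. If some orbit were infinite, compactness and the finitely many affine pieces of the generators would let me find, via a König-type argument on the Schreier graph with affine labels, an element $h\in G$ mapping a small dyadic block $B$ into a proper subblock of itself by an expanding or contracting affine map. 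Such an $h$ has infinite order, which is a contradiction. For $n\ge2$ I must be careful here, because the map can contract in one coordinate and expand in another. The argument would track the vector of dyadic depths $(d_1,\dots,d_n)$ of the image blocks. If these depth vectors were bounded, only finitely many blocks would occur, giving finite orbits. If they were unbounded, a pigeonhole argument on pairs (block label, depth vector modulo the translation structure) would produce an element $h$ with $h(B)\subsetneq B$ or $h(B)\supsetneq B$ after following a suitable path.

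The final step is to assemble the invariant partition. Once all orbits of dyadic blocks from a fixed fine decomposition are finite and uniformly of bounded depth, take $\mathcal{P}$ to be the common refinement of the finitely many images of the generators' domain decompositions. By compactness this refinement is finite, and it is $G$-invariant by construction. This yields the embedding of $G$ into the symmetric group on the blocks of $\mathcal{P}$, and therefore $G$ is finite.
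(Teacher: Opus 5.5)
\textbf{There is a genuine gap: the finitely generated step, which carries the whole content of the theorem, is asserted rather than proved.} Your single-element step is essentially fine; it is the paper's Proposition~\ref{Prop:IdenticalBrock}. So is the observation that a torsion group has trivial germs at fixed points.

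The difficulty you flag for $n\ge 2$ (contraction in one coordinate, expansion in another) is not the real obstacle. Dyadic subblocks that meet are nested coordinatewise. Hence any $h\in G$ that is affine on a subblock $B$, with $h(B)\cap B\neq\emptyset$ and $h|_B\neq\mathrm{id}$, has a fixed point in $B\cap h(B)$ at which its germ has infinite order, mixed signs or not.

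What you actually must produce is a path element that is affine on \emph{all} of $B$ and brings $B$ back to meet itself. That requires controlling how deep the intermediate generators read.
\begin{itemize}
\item For $n=1$ this works because the orbit dynamics is a single stack: each generator rewrites a prefix of length at most $N$ and leaves the tail alone. Take times after which the path never reads below the current top $N$ letters. Two such times with the same top letters $p$ give an element mapping $[p]$ affinely into itself.
\item For $n\ge 2$ there is one stack per coordinate, and one can be consumed while another grows. The baker's map $(aw_1,w_2)\mapsto(w_1,aw_2)$ lies in $2V$, and elements of $2V$ simulate reversible Turing machines.
\item You would therefore need pairs of times that are simultaneously forward records for the growing coordinates and backward records for the shrinking ones. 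Such pairs need not exist; think of a head sweeping back and forth over a growing region.
\end{itemize}
Your pigeonhole on ``(block label, depth vector modulo the translation structure)'' does not address this.

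Moreover, no argument of the shape you sketch can work for $n\ge2$. A pigeonhole over finitely many local states determined by the generators yields an explicit depth bound, hence an invariant partition $\mathcal{P}$ of computable size. Applied to $G=\langle g\rangle$, this would bound the order of every torsion $g\in 2V$ by a computable function of $g$. Together with the solvable word problem, that would decide the torsion problem in $2V$, which Belk and Bleak proved undecidable.

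Your final step is the same missing uniformity in another guise. Pointwise finite orbits do not give finitely many images of a block or a uniform depth bound, and compactness does not help without that uniformity. So the finite invariant partition is assumed rather than constructed.

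The paper argues quite differently, and purely combinatorially. It produces a common invariant dyadic block for torsion $g,h$ with $gh$ torsion, then inducts on the number of generators. That route does not supply your missing ingredient either, because Corollary~\ref{Cor:TwoGenerator} already fails in $V$:
\begin{itemize}
\item Let $c$ cyclically permute three disjoint cylinders.
\item Let $t$ have infinite order with support in one of them, and set $v=t\,(ctc^{-1})^{-1}$.
\item Then $vc$, $c$ and their product all have order $3$, while $v\in\langle vc,c\rangle$ has infinite order.
\end{itemize}
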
 

On the other hand,
we show that   

\begin{theorem}\label{Thm:Main2} 
        For $n \geq 2$,  
	then the Brin-Thompson group  $nV$  contains continuum many copies of
	the additive group of the rationals  $\mathbb{Q}$  sharing 
	the subgroup isomorphic to  $\mathbb{Z}$.  
\end{theorem}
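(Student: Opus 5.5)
The approach is to build each copy of $\mathbb{Q}$ as an increasing union of infinite cyclic subgroups $\langle g_0\rangle \subset \langle g_1\rangle \subset \cdots$ with $g_k^{\,k+1} = g_{k-1}$. Such a union is isomorphic to $\mathbb{Q}$ via $g_k \mapsto 1/(k+1)!$. Every copy will contain the same $g_0$, which supplies the shared copy of $\mathbb{Z}$. I work in $2V$, since $2V$ embeds in $nV$ by acting trivially on the remaining coordinates.

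The engine is a root lemma. Call $g\in 2V$ \emph{split} if it has the form $f\times \mathrm{id}$ on $\mathcal{C}\times\mathcal{C}$, with $f\in V$ of infinite order. The lemma claims that for every split $g=f\times\mathrm{id}$ and every $m\ge 2$ there is $h\in 2V$ with $h^m=g$, and that $h$ is conjugate in $2V$ to a split element. For the construction, cut the second coordinate into $m$ dyadic pieces $B_1,\dots,B_m$; any number $m\ge1$ of dyadic pieces is attainable. Then $g$ preserves each slab $\mathcal{C}\times B_i$ and acts there as $f\times\mathrm{id}$. Let $h$ send $\mathcal{C}\times B_i$ to $\mathcal{C}\times B_{i+1}$ by the identity in the first coordinate and the affine identification $B_i\to B_{i+1}$ in the second, for $i<m$. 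Let $h$ send $\mathcal{C}\times B_m$ to $\mathcal{C}\times B_1$ by $f$ in the first coordinate and the affine map in the second. This $h$ is piecewise dyadic-affine, and $h^m=g$ by construction.

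To see that $h$ is conjugate to a split element, choose an element $c\in 2V$ that carries each slab $\mathcal{C}\times B_i$ affinely onto a slab $A_i\times\mathcal{C}$, where $A_1\sqcup\cdots\sqcup A_m=\mathcal{C}$ is a dyadic partition of the first coordinate. Then $chc^{-1}=f'\times\mathrm{id}$, where $f'\in V$ cycles the $A_i$ and applies a rescaled copy of $f$ on the last step. Moreover $f'$ has infinite order because $f'^m$ restricts to a copy of $f$ on each $A_i$. The roots are taken in the second coordinate precisely to avoid Higman's obstruction in $V$: $g$ is already diagonal with respect to the slabs, so no root of $f$ inside $V$ is ever required.

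Iterating the lemma with $m=k+1$ at stage $k$, and conjugating back, produces $g_k$ with $g_k^{k+1}=g_{k-1}$ and each $g_k$ of infinite order. This gives one copy of $\mathbb{Q}$ containing $\langle g_0\rangle\cong\mathbb{Z}$.

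For continuum many copies, the plan is to make a binary choice at every stage between two distinct $(k+1)$-th roots of $g_{k-1}$. For instance, use two different slab decompositions, or conjugate $h$ by an element commuting with $g_{k-1}$ but not with $h$. Distinct branches then yield distinct subgroups by the following argument. In a group isomorphic to $\mathbb{Q}$, roots are unique. So if two branches first differ at stage $k$, with roots $h\neq h'$ of $g_{k-1}$, no single copy can contain both $h$ and $h'$. Since every copy produced contains $g_0$, the $2^{\aleph_0}$ branches give continuum many distinct copies sharing $\langle g_0\rangle$.

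The main obstacle I expect is guaranteeing that the two roots chosen at each stage are genuinely different elements. Both choices must also stay within the class of elements conjugate to split ones, so that the induction continues. I would first try to use two different dyadic partitions $B_i$ of unequal sizes, and verify directly on a block that the resulting maps $h$ differ. The remaining verifications are that the conjugator $c$ lies in $2V$ and that all maps are dyadic-affine on blocks; these are routine.
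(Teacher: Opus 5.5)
Your proposal is correct and is essentially the paper's argument. The root you build, by cycling slabs in the coordinate on which the element acts trivially and applying $f$ once per cycle, is the paper's construction of $h_1$ and $s_k$. The continuum of copies comes, as in the paper, from a binary choice of slab partition at each stage ($L_k$ versus $R_k$). Two things you do differently are improvements in presentation: conjugating back to split form gives a cleaner induction than the paper's explicit nested blocks $L_{2,3,\ldots,k}$, and your uniqueness-of-roots-in-$\mathbb{Q}$ argument proves the distinctness of the copies, which the paper only calls easy to check.

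One small adjustment: for $m=2$ there is only one two-piece dyadic partition (the paper likewise has $L_2=R_2$), so "partitions of unequal sizes" cannot work at that stage. There, branch by swapping the order of $B_1,B_2$ or by moving the factor $f$ to another step of the cycle, or simply start branching at $m\ge 3$.
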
 

This result provides another concrete example 
to Problem \ref{Prob:MainProblem} proposed in Kourovka notebook \cite{K} 
by M. Bridson and P. de la Harpe 
(refer to the introduction of  \cite{BHM} for its short history)
apart from the central extension  $\bar{T}$  of  $T$  by  $\mathbb{Z}$ 
and a few of other examples that J. Belk, J. Hyde and  F. Matucci have constructed in \cite{BHM}.
\begin{problem}\label{Prob:MainProblem} 
	Find an explicit and ``natural" finitely presented group  $\Gamma$  
	and an embedding of the additive group of the rationals  $\mathbb{Q}$  in  
	$\Gamma$.  
\end{problem} 

\medskip  

\noindent
{\bf Organization of the paper:} 

For the rest of the paper, 
we define  
Brin-Thompson groups  $nV$  and 
provide a description for the combinatorial interpretation of the group elements 
in Section \ref{Sec:Preliminaries}. 
In Section \ref{Sec:Torsion}, 
we study torsion elements, 
torsion subgroups and we prove Theorem \ref{Thm:Main1}.
In Section \ref{Sec:Roots}, we focus on infinite order elements and prove Theorem \ref{Thm:Main2}. 
\medskip 

\noindent 
{\bf Acknowledgements:} 
Both authors would like to thank Corentin Bodart deeply for pointing out 
an error in the earlier version and for all the valuable suggestions which significantly improved the paper.
The second author wishes to thank Collin Bleak and Takuya Sakasai 
for useful discussions and supports in the early stage of this project. 
She would also like to thank Anthony Genevois and Nathan Corwin 
for sending her further references on this results while putting 
the manuscript  \cite{Sheng} on arXiv. 
Both authors were supported by JSPS KAKENHI Grant Number 21K03259 
and the second author is also supported by JSPS KAKENHI Grant Number 26K16982.

%
%
\medskip 
\section{Preliminaries}\label{Sec:Preliminaries}

\subsection{Dyadic block} 

The Brin-Thomson group  $nV$,  which we define later, 
is originally defined as a subgroup of the group of right continuous piecewise linear 
homeomorphisms of  $[0, 1]^n$.  
However, 
it is rather convenient to identify it as a subgroup of the group of 
homeomorphisms of the product of  $n$  copies of the Cantor set.  
Thus,  
let us recall the definition of the Cantor set.   

\begin{definition} 
	The Cantor set  $\mathcal{C}$   is defined to be a totally disconnected
	space obtained by iteratively deleting 
	the open middle thirds from the unit interval  $[0, 1]$  with subspace topology.  
\end{definition} 

The Cantor set  $\mathcal{C}$   can be topologically identified with the set of all binary strings  
$\{ 0, 1\}^{\mathbb{N}}$  starting from  $0$  with weak topology, 
where  $\mathbb{N}$  denotes the set of natural numbers.

\begin{definition}\label{Def:Subinterval} 
	A subinterval of  $[0, 1]$  of  
	the form  $\left[ \frac{\ell}{2^k}, \frac{\ell+1}{2^k}\right]$  
	where  $k, \ell \in \mathbb{N} \cup \{ 0 \}$  and  $\ell + 1 \leq 2^k$  
	is called a dyadic subinterval.  
\end{definition} 

If  $k = 0$  in Definition \ref{Def:Subinterval}, 
then  $\ell$  must be  $0$  and  $\left[ \frac{\ell}{2^k}, \frac{\ell+1}{2^k}\right] = [0, 1]$.    
When  $k \geq 1$, 
let  $\sum_{i=1}^{k} a_i 2^{-i} \in [0, 1]$    	
be a 2-adic expansion of  $\frac{\ell}{2^k}$.   
Then a dyadic subinterval  $\left[ \frac{\ell}{2^k}, \frac{\ell+1}{2^k}\right]$  
corresponds to 
an open and closed subset of  $\mathcal{C}$  
consisting of binary strings with the fixed prefix  $0a_1 \cdots a_k$.   
Notice that it is not homeomorphic to a dyadic subinterval of  $[0, 1]$  in fact, 
but it is a subset of  $\mathcal{C}$  homeomorphic to  $\mathcal{C}$.  

This identification leads us to the following 
definition of a dyadic subinterval in  $\mathcal{C}$  instead of  $[0, 1]$,  
which is more convenient for our later discussion.  

\begin{definition} 
	A dyadic subinterval of  $\mathcal{C}$  is defined to be 
	an open and closed subset of  $\mathcal{C}$  
	consisting of binary strings with a fixed prefix word  $ \alpha \in \{ 0, 1 \}^*$,  
	namely,  
	\begin{equation*} 
		\{ \alpha \omega \, ; \, \omega \in \{ 0, 1 \}^{\mathbb{N}} \}. 
	\end{equation*}   
\end{definition} 	 
	
Notice that any dyadic subinterval of  $\mathcal{C}$  is homeomorphic to  $\mathcal{C}$  
by rescaling. 
In particular, 
it is not a singleton and contains uncountably many elements.  

\begin{definition} 
	Let  $n$  be a positive integer.   
	A {\it dyadic subblock}  $B_* \subset \mathcal{C}^n $  is defined to be 
	a product of dyadic subintervals of  $\mathcal{C}$ such that
	\begin{enumerate}
	\item $B_*$  is a subset of  $\mathcal{C}^n$  
	\item there is a canonical affine homeomorphism 
	\begin{equation*} 
		\varphi_* : \mathcal{C}^n  \to B_* 
	\end{equation*}  
	by rescaling the coordinates (see Figure \ref{Fig:Rescale}). 
	\end{enumerate} 
	\begin{figure}[htp]  
		\begin{center} 	
	   \begin{tikzpicture}[baseline=-0.65ex, thick, scale=0.3]
                 \draw (0, 0) node{Rescaling map $\varphi_*$:};
                 \end{tikzpicture}\quad
		\begin{tikzpicture}[baseline=-0.65ex, thick, scale=0.3]
                 \draw (0, -5) to (0,5);
                 \draw (0, -5) to (10, -5);
                 \draw (10, -5) to (10,5);
                 \draw (10, 5) to (0, 5);
                 \filldraw [fill=lightgray,draw=black] (0,-5) -- (0,5) -- (10,5) -- (10,-5);
                 \end{tikzpicture}\quad
                 \begin{tikzpicture}[baseline=-0.65ex, thick, scale=0.3]
                 \draw (0, 0) node{$\mapsto$};
                 \end{tikzpicture}\quad	
        \begin{tikzpicture}[baseline=-0.65ex, thick, scale=0.3]
                 \draw (0, -5) to (0,5);
                 \draw (0, -5) to (10, -5);
                 \draw (10, -5) to (10,5);
                 \draw (10, 5) to (0, 5);
                 \draw (5, -5) to (5,5);
                 \draw (5, 0) to (10,0);
                 \draw (7.5, 0) to (7.5,5);
                 \draw (7.5,2.5) to (10,2.5);
                 \draw (8.75, 2.5) to (8.75,5);
                 \filldraw [fill=lightgray,draw=black] (5,0) -- (7.5,0) -- (7.5,5) -- (5,5);
               \end{tikzpicture}
			\caption{Rescaling map $\varphi_*$}
			\label{Fig:Rescale}  
		\end{center} 
	\end{figure} 
\end{definition} 

We now define the concept which plays a key role throughout this paper.  

\begin{definition}[Dyadic block]  
	A {\it dyadic block}  $B$  is defined to be a collection of subblocks
	\begin{equation*} 
		B = \{ B_i \} 
	\end{equation*} 
	such that 
	\begin{enumerate} 
	\item  
		each  $B_i$  is a subblock, 
	\item 
		$B_i \cap B_j = \emptyset$  if  $i \ne j$,   
	\item 
		$\bigcup_i B_i = \mathcal{C}^n $,  
	\item 
		$| B | < \infty$,  
	\end{enumerate} 
	where  $|B|$  is the cardinality of  $B$  which we call the length of  $B$, 
	namely, 
	$i \in \mathbb{N}$  runs between  $1$  and  $|B|$.
\end{definition} 

A dyadic subblock is a subset of  $\mathcal{C}^n $ 
whereas a dyadic block is not a subset of  $\mathcal{C}^n $.  
It is  a collection of finitely many dyadic subblocks 
and contains finitely many elements. 
We would now like to introduce an important relation between 
dyadic blocks.  

\begin{definition}[Refinement] 
	Suppose  $X = \{ X_i \}$  and  $Y = \{ Y_j \}$  are dyadic blocks. 
	$X$  is defined to be a {\it refinement} of  $Y$, 
	denoted by  $X \succeq Y$,  
	if for all  $i$, 
	there exists $j$  such that  $X_i \subseteq Y_j$ holds.    
\end{definition} 

\begin{lemma}\label{Lem:Intersection} 
	Suppose  $X = \{ X_i \}, \, Y = \{ Y_j \}$  are dyadic blocks. 
	Then  $X_i \cap Y_j$  is a dyadic subblock for any  $i$  and  $j$.  
	In particular, 
	a collection  $\{ X_i \cap Y_j \, ; \, i, j \}$  is a dyadic block.  
\end{lemma}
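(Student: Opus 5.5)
The proof reduces everything to one dimension, one coordinate at a time. A dyadic subblock is a product $I_1 \times \cdots \times I_n$ of dyadic subintervals of $\mathcal{C}$, where $I_k$ is the set of strings with a fixed prefix $\alpha_k$. For two subblocks $X_i = \prod_k I_k$ and $Y_j = \prod_k J_k$ we have
\begin{equation*}
X_i \cap Y_j = \prod_{k=1}^n (I_k \cap J_k),
\end{equation*}
since intersections of products are products of intersections. So it suffices to understand intersections of two dyadic subintervals of $\mathcal{C}$.

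The one-dimensional claim is the following. If $I$ has prefix $\alpha$ and $J$ has prefix $\beta$, then either one of $\alpha, \beta$ is a prefix of the other, or they are incomparable.
\begin{itemize}
\item In the comparable case, say $\alpha$ is a prefix of $\beta$, every string beginning with $\beta$ also begins with $\alpha$. Hence $J \subseteq I$ and $I \cap J = J$, which is again a dyadic subinterval.
\item In the incomparable case there is a first index where $\alpha$ and $\beta$ differ. No infinite string can begin with both, so $I \cap J = \emptyset$.
\end{itemize}
Consequently each $I_k \cap J_k$ is either a dyadic subinterval or empty. Therefore $X_i \cap Y_j$ is either a product of dyadic subintervals, i.e.\ a dyadic subblock with its canonical rescaling map $\varphi_*$ given coordinatewise, or it is empty.

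The main obstacle is exactly this empty case. Taken literally, the statement ``$X_i \cap Y_j$ is a dyadic subblock for any $i,j$'' is false: two distinct members of the same block are disjoint, so distinct members of $X$ and $Y$ can be disjoint as well. I would therefore read the lemma as asserting that every \emph{nonempty} intersection is a dyadic subblock, and that the collection $\{X_i \cap Y_j\}$ is understood with empty sets discarded. I would state this convention explicitly in the proof.

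For the second assertion I would check the four conditions of a dyadic block for the nonempty intersections.
\begin{itemize}
\item Each member is a subblock, by the argument above.
\item Distinct pairs $(i,j) \neq (i',j')$ give disjoint sets. Indeed, $(X_i \cap Y_j) \cap (X_{i'} \cap Y_{j'}) \subseteq (X_i \cap X_{i'}) \cap (Y_j \cap Y_{j'})$, and one of these two factors is empty.
\item The union is all of $\mathcal{C}^n$, because
\begin{equation*}
\bigcup_{i,j} (X_i \cap Y_j) = \Bigl(\bigcup_i X_i\Bigr) \cap \Bigl(\bigcup_j Y_j\Bigr) = \mathcal{C}^n.
\end{equation*}
\item The collection is finite, since it has at most $|X|\,|Y|$ members.
\end{itemize}

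As a remark, the resulting block is a common refinement of $X$ and $Y$.
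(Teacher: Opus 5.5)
Your proof is correct and takes the same route as the paper. The paper's entire argument is that an intersection of dyadic subintervals is a dyadic subinterval, applied coordinatewise, and your prefix comparison supplies the proof of that fact. Your caveat is also correct: that fact, and with it the lemma and the definition of $X \wedge Y$, holds only for nonempty intersections, so discarding the empty sets as you do is the right reading.
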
 
\begin{proof} 
	This follows from the fact that 
	an intersection of dyadic subintervals is a dyadic subinterval.  
\end{proof} 

\begin{definition}[Common refinement] 
	The {\it common refinement} of  $X = \{ X_i \}$  and  $Y = \{ Y_j \}$  is defined as
	and denoted by
	\begin{equation*} 
		X \wedge Y = \{ X_i \cap X_j \, ; \, 1 \leq i \leq |X|, \, 1 \leq j \leq |Y| \},     
	\end{equation*} 
	which is a dyadic block by Lemma \ref{Lem:Intersection}.    
	\begin{figure}[htp]  
		\begin{center} 
		        \begin{tikzpicture}[baseline=-0.65ex, thick, scale=0.3]
                 \draw (0, -5) to (0,5);
                 \draw (0, -5) to (10, -5);
                 \draw (10, -5) to (10,5);
                 \draw (10, 5) to (0, 5);
                 \draw (5, -5) to (5,5);
                   \draw (5, -7) node{$X$};
               \end{tikzpicture}
                  \begin{tikzpicture}[baseline=-0.65ex, thick, scale=0.3]
                 \end{tikzpicture}\quad\quad	
        \begin{tikzpicture}[baseline=-0.65ex, thick, scale=0.3]
                 \draw (0, -5) to (0,5);
                 \draw (0, -5) to (10, -5);
                 \draw (10, -5) to (10,5);
                 \draw (10, 5) to (0, 5);
                 \draw (0, 0) to (10,0);
                                    \draw (5, -7) node{$Y$};
               \end{tikzpicture}\quad\quad	
                         \begin{tikzpicture}[baseline=-0.65ex, thick, scale=0.3]
                 \end{tikzpicture}\quad\quad	
        \begin{tikzpicture}[baseline=-0.65ex, thick, scale=0.3]
                 \draw (0, -5) to (0,5);
                 \draw (0, -5) to (10, -5);
                 \draw (10, -5) to (10,5);
                 \draw (10, 5) to (0, 5);
                 \draw (0, 0) to (10,0);
                                  \draw (5, -5) to (5,5);
                                                     \draw (5, -7) node{$X \wedge Y$};
               \end{tikzpicture}
			\caption{$X, Y \; and \; X \wedge Y$} 
			\label{Fig:CommonRefinement}  
		\end{center} 	
	\end{figure} 
\end{definition}  

\begin{remark} 
	The operation  $\wedge$  is commutative and associative.  
\end{remark} 	

\begin{remark}\label{Rm:CommonRefinement}  
	$X \wedge Y$  is a refinement of both  $X$  and  $Y$.  
\end{remark}

\medskip 
\subsection{Brin-Thompson group  $nV$} 

To define Brin-Thompson group  $nV$,  
we would like to introduce its combinatorial description for pragmatic computations.     
Let  $\mathcal{T}$  be the set of all triples  $(X, Y, \sigma)$, 
where  $X = \{ X_i \}$  and  $Y = \{ Y_j \}$  are dyadic blocks of  $\mathcal{C}^n$  
with the same length  $|X| = |Y| = m$,   
and  $\sigma$  is a permutation in 
the symmetric group $\mathfrak{S}_m = {\rm Sym} \{ 1, 2, \cdots, m \}$. 
The triple  $(X, Y, \sigma)$  induces a unique self-homeomorphism  $g$
of  $\mathcal{C}^n $ 
such that 
\begin{enumerate} 
\item 
	$g(X_i) = Y_{\sigma (i)}$, 
\item 
	If we identify  $X_i$  with the corresponding dyadic subblock in  $[0, 1]^n$, 
	$g|_{X_i}$  extends to an affine map of  $\mathbb{R}^n$  preserving coordinates with orientation.  
\end{enumerate} 
This correspondence between  $(X, Y, \sigma)$  and  $g$  leads us to define the following.  

\begin{definition} 
	Assigning  $g$  to  $(X, Y, \sigma)$,   	
	we define a map 
	\begin{equation*} 
		\bar{\pi} : \mathcal{T} \to {\rm Homeo} \, \mathcal{C}^n,   
	\end{equation*} 
	and denote the image  $\bar{\pi}(\mathcal{T})$  by  $nV < {\rm Homeo} \, \mathcal{C}^n$.  
	\begin{figure}[htp]  
		\begin{center}
	   \begin{tikzpicture}[baseline=-0.65ex, thick, scale=0.3]
                 \draw (0, 0) node{Induce homeomorphism $\vec{\pi}$:};
                 \end{tikzpicture}\quad
        \begin{tikzpicture}[baseline=-0.65ex, thick, scale=0.3]
                 \draw (0, -5) to (0,5);
                 \draw (0, -5) to (10, -5);
                 \draw (10, -5) to (10,5);
                 \draw (10, 5) to (0, 5);
                 \draw (5, -5) to (5,5);
                 \draw (2.5, 0) node{1};
                 \draw (7.5, 0) node{2};
               \end{tikzpicture}\quad	
                               \begin{tikzpicture}[baseline=-0.65ex, thick, scale=0.3]
                 \draw (5, 0) node{$\mapsto$};
                 \end{tikzpicture}\quad	
        \begin{tikzpicture}[baseline=-0.65ex, thick, scale=0.3]
                 \draw (0, -5) to (0,5);
                 \draw (0, -5) to (10, -5);
                 \draw (10, -5) to (10,5);
                 \draw (10, 5) to (0, 5);
                 \draw (0, 0) to (10,0);
                      \draw (5, 2.5) node{1};
                 \draw (5, -2.5) node{2};
               \end{tikzpicture}
			\caption{The map  $\bar{\pi}$} 
			\label{Fig:InducedHom}  
		\end{center} 	
	\end{figure} 
\end{definition} 

Obviously  $\bar{\pi}$  is not injective.  
For example, 
$id \in {\rm Homeo} \, \mathcal{C}^n $  can be represented by a triple  $(X, X, id)$  for any 
dyadic block  $X$  (see Figure \ref{Fig:identity}).  
\begin{figure}[htp]  
	\begin{center} 
	           \begin{tikzpicture}[baseline=-0.65ex, thick, scale=0.3]
                 \draw (0, -5) to (0,5);
                 \draw (0, -5) to (10, -5);
                 \draw (10, -5) to (10,5);
                 \draw (10, 5) to (0, 5);
                 \draw (5, -5) to (5,5);
                      \draw (2.5, 0) node{1};
                 \draw (7.5, 0) node{2};
               \end{tikzpicture}
                               \begin{tikzpicture}[baseline=-0.65ex, thick, scale=0.3]
                 \draw (5, 0) node{$\mapsto$};
                 \end{tikzpicture}
        \begin{tikzpicture}[baseline=-0.65ex, thick, scale=0.3]
                 \draw (0, -5) to (0,5);
                 \draw (0, -5) to (10, -5);
                 \draw (10, -5) to (10,5);
                 \draw (10, 5) to (0, 5);
                 \draw (5, -5) to (5,5);
                      \draw (2.5, 0) node{1};
                 \draw (7.5, 0) node{2};
               \end{tikzpicture}
               \quad
                     \begin{tikzpicture}[baseline=-0.65ex, thick, scale=0.3]
                 \draw (0, -5) to (0,5);
                 \draw (0, -5) to (10, -5);
                 \draw (10, -5) to (10,5);
                 \draw (10, 5) to (0, 5);
                 \draw (0, 0) to (10,0);
                     \draw (5, 2.5) node{1};
                 \draw (5, -2.5) node{2};
               \end{tikzpicture}
                               \begin{tikzpicture}[baseline=-0.65ex, thick, scale=0.3]
                 \draw (5, 0) node{$\mapsto$};
                 \end{tikzpicture}	
        \begin{tikzpicture}[baseline=-0.65ex, thick, scale=0.3]
                 \draw (0, -5) to (0,5);
                 \draw (0, -5) to (10, -5);
                 \draw (10, -5) to (10,5);
                 \draw (10, 5) to (0, 5);
                 \draw (0, 0) to (10,0);
                     \draw (5, 2.5) node{1};
                 \draw (5, -2.5) node{2};
               \end{tikzpicture}
		\caption{$(X, X, id) \; \; and \; \; (X', X', id)$} 
		\label{Fig:identity}  
	\end{center} 
\end{figure} 

We will give a group structure on an appropriate quotient of  $\mathcal{T}$ 
in order to say that  $\bar{\pi}$  induces  a homomorphism. 
To do this, 
we introduce a crucial terminology 
where we represent 
an element in  $nV$  by a pair of dyadic blocks with a permutation.

\begin{definition}[Admissibility]
	A dyadic block  $X$  is said to be 
	{\it admissible} for  $g \in nV$, 
	if  $g$  can be 
	represented by a triple  $(X, Y, \sigma)$  for some  $Y$  and  $\sigma$.  
\end{definition} 

If  $X = \{ X_i \}$  is admissible for  $g \in nV$, 
then  $Y = \{ g(X_i) \}$  is a dyadic block 
by the definition of the triple.  
Each  $g(X_i)$  is a dyadic subblock and  $Y$  is a collection of 
such subblocks that form a dyadic block,  
thus it is natural to denote  $Y$  by  $g(X)$.  
Under this notational convention, 
$g$  represents the map between dyadic blocks instead of  $\mathcal{C}^n$s,    
and we are abusing the notation  $g$  slightly.  
We distinguish them according to whether the source is  $\mathcal{C}^n$  or a dyadic block.  
When we use the notation  $g(X)$, 
$X$  must be admissible for  $g$.    

If  $X$  is admissible for  $g \in nV$, 
then  $(X, g(X), \sigma)$  now represents  $g$  for 
a suitable permutation  $\sigma$.  
This expression will be useful for notational simplification 
of our computation later on.  
Also, 
we say  $g$  acts on  $X$  if  $X$  is admissible for  $g$  for convenience 
though it is not a group action of  $nV$  on the set of dyadic blocks.   

The following three claims in the lemma will be frequently used in the later discussion 
without being referred to explicitly.  

\begin{lemma}\label{Lem:Admissibility}  
	Let  $X, Y$  be dyadic blocks of  $\mathcal{C}^n$.  
	\begin{enumerate} 
	\item 
		Suppose  $X \succeq Y$  and  $Y$  is admissible for  $g \in nV$.    
		Then,  
		$X$  is admissible for  $g$  and  $g(X) \succeq g(Y)$.  
	\item 
		If  $Y$  is admissible for  $g \in nV$,  
		then  $X \wedge Y$  is admissible for  $g$  for any dyadic block  $X$.
	\item 	
		If both  $X$  and  $Y$  are admissible for  $g \in nV$, 
		then  
		\begin{equation*} 
			g(X \wedge Y) = g(X) \wedge g(Y). 
		\end{equation*}   
	\end{enumerate} 
\end{lemma}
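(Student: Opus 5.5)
The plan is to reduce everything to one elementary fact. If $g$ restricted to a dyadic subblock $Y_j$ is the canonical affine map $Y_j \to g(Y_j)$, and $Z \subseteq Y_j$ is a dyadic subblock, then $g(Z)$ is again a dyadic subblock and $g|_Z$ is the canonical affine map $Z \to g(Z)$. This holds because $g|_{Y_j} = \varphi_{g(Y_j)} \circ \varphi_{Y_j}^{-1}$ is coordinatewise a rescaling-and-translation. In Cantor-set language it replaces the prefix $\alpha_k$ by $\beta_k$ in the $k$-th coordinate. A subblock of $Y_j$ has prefixes $\alpha_k\gamma_k$, and its image has prefixes $\beta_k\gamma_k$, which is a dyadic subblock. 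The restricted map is still prefix replacement, hence affine and orientation preserving. I would state this first as the key observation; everything else is bookkeeping.

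For (1), take $X \succeq Y$ with $Y$ admissible for $g$. For each $X_i$ choose $j$ with $X_i \subseteq Y_j$. By the observation, $g(X_i)$ is a dyadic subblock and $g|_{X_i}$ is affine of the required type. The sets $g(X_i)$ are pairwise disjoint and cover $\mathcal{C}^n$ because $g$ is a bijection and the $X_i$ partition $\mathcal{C}^n$. Hence $\{g(X_i)\}$ is a dyadic block of length $|X|$, and $g$ is represented by $(X, \{g(X_i)\}, \sigma)$ for the obvious indexing permutation, so $X$ is admissible. Moreover $g(X_i) \subseteq g(Y_j)$, so $g(X) \succeq g(Y)$.

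Part (2) then follows immediately. By Remark \ref{Rm:CommonRefinement} we have $X \wedge Y \succeq Y$, so (1) applies. One should note that empty intersections are discarded in $X \wedge Y$, or equivalently that the nonempty intersections are subblocks by Lemma \ref{Lem:Intersection}.

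For (3), both sides consist of subsets of $\mathcal{C}^n$. The left side is $\{g(X_i \cap Y_j)\}$, and since $g$ is a bijection of $\mathcal{C}^n$, $g(X_i \cap Y_j) = g(X_i) \cap g(Y_j)$, which is exactly the corresponding element of $g(X) \wedge g(Y)$. Both sides are indexed by the nonempty intersections, so they agree as collections. Here we use that $X \wedge Y$ is admissible by (2), so $g(X \wedge Y)$ is defined.

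The main obstacle is the key observation itself, and in particular making precise that the restriction of a canonical rescaling is again canonical and lands on a dyadic subblock. The prefix description of dyadic subintervals of $\mathcal{C}$ handles this cleanly, so I expect no real difficulty.
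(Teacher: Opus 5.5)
Your proposal is correct and follows the paper's route. Part (1) restricts the piecewise canonical affine map to subblocks of the $Y_j$; you make this explicit via prefix replacement, where the paper simply says it follows from the definition. Part (2) comes from $X \wedge Y \succeq Y$ together with (1), and part (3) rests on $g(X_i \cap Y_j) = g(X_i) \cap g(Y_j)$. The one difference in (3) is that the paper argues by mutual refinement of $g(X \wedge Y)$ and $g(X) \wedge g(Y)$, whereas your direct elementwise comparison is cleaner. Your remark that empty intersections must be discarded from $X \wedge Y$ is also a correct repair of Lemma \ref{Lem:Intersection} as stated.
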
 
\begin{proof} 
	For (1), 
	the admissibility of  $X$  for  $g$  follows immediately from the definition of 
	the admissibility of  $Y$  for  $g$, 
	and  $g(X)$  is a valid expression.  
	The relation  $g(X) \succeq g(Y)$  can be verified directly by the definition 
	of the action of  $g \in nV$  on dyadic blocks.   

	(2)  follows from Remark \ref{Rm:CommonRefinement} and  (1), 

	For (3),  
	since  $g(X \wedge Y) \succeq  g(X)$  and 
	$g(X \wedge Y) \succeq  g(Y)$  by Remark \ref{Rm:CommonRefinement} and (1),  
	$g(X \wedge Y) \succeq g(X) \wedge g(Y)$.  
	On the other hand, 
	since  $g(X_i) \cap g(Y_j) \supset g(X_i \cap Y_j)$, 
	$g(X) \wedge g(Y) \succeq g(X \wedge Y)$  
	and we obtain the desired equality.   
\end{proof} 

\begin{remark} 
	The role of admissibility of  $Y$  in Lemma \ref{Lem:Admissibility} (1)  is crucial.  
	If  $X \succeq Y$  and  $X$  instead of  $Y$  is admissible for  $g \in nV$,    
	$Y$  may not be admissible for  $g$.  	
	For instance, 
	if  $g$  is nontrivial, 
	$X$  is admissible for  $g$  and  $Y = \{ \mathcal{C}^n \}$  is a dyadic block with a single element,   
	then though  $X \succeq Y$  and  $g(X)$  does make sense by definition, but	  
	$Y$  is not admissible for  $g$.  
\end{remark} 

We now introduce a multiplication on  $\mathcal{T}$ which 
turned out to induce the group operation in its images $nV$.

\begin{definition} 
	Suppose we have two triples  
	$(X, Y, \sigma), \, (X', Y', \sigma') \in \mathcal{T}$  such that  
	$\bar{\pi} (X, Y, \sigma) = g$  and  $\bar{\pi} (X', Y', \sigma') = g'$. 
	Define the multiplication  $\cdot$  on  $\mathcal{T}$
	by 
	\begin{equation}\label{Eq:Product} 
		  (X, Y, \sigma) \cdot (X', Y', \sigma') 
		  = (g^{-1} (Y \wedge X'), g'(Y \wedge X'), \tau),   
	\end{equation} 
	where  $\tau$  is a permutation of  $|Y \wedge X'|$  letters 
	induced from  $\sigma$  and  $\sigma'$.  
	More precisely, $\tau = \sigma' \circ \sigma$.
\end{definition} 

The first two terms  
appeared in the right hand side of  (\ref{Eq:Product})  are valid expressions  
since  $Y$  is admissible for  $g^{-1}$  and  $X'$  is admissible for  $g'$.  
In particular, 
the expression of each dyadic block appeared in the product of two dyadic block pairs 
is automatically valid.

\begin{remark}  
	It should be remarked that  
	\begin{equation*} 
		\bar{\pi}((X, Y, \sigma) \cdot (X', Y', \sigma')) = h \circ g,  
	\end{equation*}  
	where  $\circ$  indicates the composition of the maps in  ${\rm Homeo} \, \mathcal{C}^n$.  
	Thus,  
	$\bar{\pi}$  becomes an anti-homomorphism between  
	$\mathcal{T}$  and  $nV < {\rm Homeo} \, \mathcal{C}^n$.
\end{remark} 

\begin{lemma}\label{associativity} 
	The multiplication  $\cdot$  on  $\mathcal{T}$  is associative and 
	the pair  $(\mathcal{T} \, , \, \cdot)$  forms a 
	monoid.  
\end{lemma}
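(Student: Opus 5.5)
Associativity will be proved by showing that both bracketings of a triple product are represented by the same triple. Everything is computed from common refinements and their images, and the needed identities come from Lemma \ref{Lem:Admissibility}. Let $a=(X,Y,\sigma)$, $b=(X',Y',\sigma')$ and $c=(X'',Y'',\sigma'')$, with $\bar\pi(a)=g$, $\bar\pi(b)=g'$ and $\bar\pi(c)=g''$. By definition the product $a\cdot b$ has source $g^{-1}(Y\wedge X')$ and target $g'(Y\wedge X')$. Also, $\bar\pi(a\cdot b)=g'\circ g$.

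First I compute $(a\cdot b)\cdot c$. Its source block is
\[
(g'g)^{-1}\bigl(g'(Y\wedge X')\wedge X''\bigr).
\]
The block $X'$ is admissible for $g'$, so $Y\wedge X'$ is admissible for $g'$ by Lemma \ref{Lem:Admissibility}(2). The block $g'^{-1}(X'')$ need not be defined, since $X''$ need not be admissible for $g'^{-1}$. However, $g'(Y\wedge X')\wedge X''$ refines $g'(Y\wedge X')$, which is admissible for $g'^{-1}$. So by Lemma \ref{Lem:Admissibility}(1) it is admissible for $g'^{-1}$, and its preimage $Z:=g'^{-1}\bigl(g'(Y\wedge X')\wedge X''\bigr)$ makes sense. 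This $Z$ is a refinement of $Y\wedge X'$.

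I will identify $Z$ set-theoretically. The subblocks of $Z$ are the nonempty sets
\[
g'^{-1}\bigl(g'(Y_i\cap X'_j)\cap X''_k\bigr)=Y_i\cap X'_j\cap g'^{-1}(X''_k).
\]
Here I regard $g'^{-1}(X''_k)$ as a subset of $\mathcal C^n$. Thus $Z$ is the partition of $\mathcal C^n$ generated by $Y$, $X'$ and the pullback of $X''$ under $g'$. The source of $(a\cdot b)\cdot c$ is then $g^{-1}(Z)$, and its target is $g''\bigl(g'(Y\wedge X')\wedge X''\bigr)=g''g'(Z)$.

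Next I compute $a\cdot(b\cdot c)$. Its source is $g^{-1}\bigl(Y\wedge g'^{-1}(Y'\wedge X'')\bigr)$. The same subset computation shows that $W:=Y\wedge g'^{-1}(Y'\wedge X'')$ has subblocks
\[
Y_i\cap X'_j\cap g'^{-1}(X''_k),
\]
because $g'^{-1}(Y'_l)=X'_{\sigma'^{-1}(l)}$. Hence $W=Z$ as collections of subblocks, and the sources of the two bracketings agree. The targets are $g''g'(Z)$ in the first case and $(g''g')(W)$ in the second, so they also agree.

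It remains to compare the permutations. In both bracketings the permutation records which subblock of the common source is sent to which subblock of the common target. Both triples represent the same homeomorphism $g''g'g$, since $\bar\pi$ respects composition, as in the remark preceding the lemma. Once the source and target blocks are fixed, a homeomorphism determines the matching of subblocks, so the permutations coincide. This yields associativity with $\tau=\sigma''\circ\sigma'\circ\sigma$ on the appropriate indices.

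For the monoid structure, I take as identity the triple $(\{\mathcal C^n\},\{\mathcal C^n\},id)$. Multiplying by it on either side gives back the same triple, since $\{\mathcal C^n\}\wedge X=X$.

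The main obstacle is the bookkeeping with admissibility. Every intermediate expression such as $g'^{-1}(\cdots)$ must be valid, and the refinement and intersection identities must hold at the level of subblocks rather than merely as partitions. I would handle this by arguing with subsets throughout and invoking Lemma \ref{Lem:Admissibility}(1)--(3) for validity. A secondary point is that the permutation is only well-defined after the blocks are indexed, which I resolve by identifying it with the subblock matching induced by the homeomorphism.
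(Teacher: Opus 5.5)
Your proposal is correct and follows the paper's route. You compute both bracketings explicitly, show that the source and target blocks coincide because preimages and images under an element of $nV$ commute with intersections of subblocks (this is the identity $g(X\wedge Y)=\{g(X_i)\cap g(Y_j)\}$ the paper invokes), and take $(\{\mathcal{C}^n\},\{\mathcal{C}^n\},id)$ as the identity. Your version is more careful than the paper's terse one: you check admissibility of each intermediate block and fix the permutation through the composite homeomorphism $g''\circ g'\circ g$, both of which the paper leaves implicit.
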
 
\begin{proof} 
	To show the associativity, 
	let  $(X, Y, \sigma), \, (X', Y', \sigma'), \, (X'', Y'', \sigma'') \in \mathcal{T}$ 
	define  $g, g', g'' \in nV$.  
	Then, 
	since permutations are defined by  $g, g', g''$  automatically, 
	we omit them from the notation for computation, 
	and we have 
	\begin{align*} 
		((X, Y) \cdot (X', Y')) \cdot (X'', Y'') 
		& = (g^{-1}(Y \wedge X'), g'(Y \wedge X')) \cdot (X'', Y'') \\
		& = ((gg')^{-1} (g'(Y \wedge X') \wedge X''), g''(g'(Y \wedge X') \wedge X'')),   
	\end{align*} 
	and 
	\begin{align*} 
		(X, Y) \cdot ((X', Y') \cdot (X'', Y'')) 
		& = (X, Y) \cdot (g'^{-1}(Y' \wedge X''), g''(Y' \wedge X'')) \\
		& = (g^{-1} (Y \wedge g'^{-1}(Y' \wedge X''), (g' g'')(Y \wedge (g'^{-1}(Y' \wedge X'')))). 
	\end{align*} 
	Both can be verified to be the same by using the following,
	\begin{equation*} 
		g(X \wedge Y) = \{ g(X_i) \cap g(Y_j) \, ; \, 1 \leq i \leq |X|,  1 \leq j \leq |Y| \}.     
	\end{equation*}  
	Moreover,
	$(\{ \mathcal{C}^n \}, \{ \mathcal{C}^n \}, id)$  is the identity element.   
\end{proof} 

We now introduce an equivalence relation  $\sim$  on  $\mathcal{T}$  to 
make it a group.

\begin{definition} 
	Suppose  $\bar{\pi}((X, Y, \sigma)) = g$  and  $\bar{\pi} (X', Y', \sigma') = g'$.  
	The equivalence relation  $\sim$  on  $\mathcal{T}$  is defined to be 
	\begin{equation*} 
		(X, Y, \sigma) \sim (X', Y', \sigma') 
		\, \Longleftrightarrow \,  
		g^{-1} g' = id.  
	\end{equation*} 
\end{definition} 

\begin{remark} 
	It is easy to check that  $\sim$  is an equivalence relation.  
	Moreover, 
	if  $(X, Y, \sigma) \sim (X', Y', \sigma')$, 
	then  
	\begin{equation*} 
		(X, Y, \sigma) \cdot (Y', X', \sigma^{-1}) 
		= (g^{-1}(Y \wedge Y'), (g')^{-1}(Y \wedge Y'), id),   
	\end{equation*} 
	where the source and target dyadic blocks on the right hand side are 
	the same.  
\end{remark}

\begin{lemma} 
	$(\mathcal{T}/\sim, \, \cdot)$  forms a group.  
\end{lemma}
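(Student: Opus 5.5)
We plan to verify the group axioms on $\mathcal{T}/\sim$ in the natural order: first that $\cdot$ descends to the quotient, then that associativity and the identity pass to it, and finally that inverses exist. Throughout we use that, by definition of $\sim$, two triples are equivalent exactly when they have the same image under $\bar{\pi}$. So $\sim$ is the kernel relation of $\bar{\pi}$, and $\bar{\pi}$ induces an injection $\mathcal{T}/\sim \to nV$.

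For well-definedness, we use the Remark that $\bar{\pi}((X,Y,\sigma)\cdot(X',Y',\sigma'))$ is the composition of $\bar{\pi}(X',Y',\sigma')$ after $\bar{\pi}(X,Y,\sigma)$. The point to check is that the right hand side of (\ref{Eq:Product}) really represents $g'\circ g$. On each piece $g^{-1}(Y_j\cap X'_k)$ of $g^{-1}(Y\wedge X')$, the map $g$ is the restriction of an affine coordinatewise map onto $Y_j\cap X'_k$. That set is a dyadic subblock by Lemma \ref{Lem:Intersection}, and $g'$ acts affinely on it, so the composite is affine on the piece and sends it onto the corresponding piece of $g'(Y\wedge X')$. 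Lemma \ref{Lem:Admissibility}(2) guarantees that the blocks involved are admissible, and the permutation $\tau$ is whatever records this correspondence. Hence the image of a product depends only on the images of the factors. If $(X,Y,\sigma)\sim(\tilde X,\tilde Y,\tilde\sigma)$ and $(X',Y',\sigma')\sim(\tilde X',\tilde Y',\tilde\sigma')$, the two products then have the same image, so they are equivalent and $\cdot$ descends to $\mathcal{T}/\sim$.

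Associativity and the identity $[(\{\mathcal{C}^n\},\{\mathcal{C}^n\},id)]$ then follow from Lemma \ref{associativity}, since the monoid structure passes to any quotient by a congruence. For inverses, given $(X,Y,\sigma)$ representing $g$, we take $(Y,X,\sigma^{-1})$, which represents $g^{-1}$. Computing $(X,Y,\sigma)\cdot(Y,X,\sigma^{-1})$ with $Y\wedge Y=Y$ gives $(g^{-1}(Y),g^{-1}(Y),id)=(X,X,id)$. This represents the identity homeomorphism, so it is equivalent to $(\{\mathcal{C}^n\},\{\mathcal{C}^n\},id)$, and the product in the other order is handled symmetrically.

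The main obstacle is the well-definedness step, specifically the bookkeeping showing that the triple in (\ref{Eq:Product}) induces exactly the composite homeomorphism. This amounts to checking that the permutation $\tau$ is the induced one and that each piece is mapped affinely and with orientation preserved. We will handle it piece by piece as above, using that restrictions of coordinate-preserving affine maps to dyadic subblocks compose to maps of the same type. A useful byproduct is that $\bar{\pi}$ descends to a bijective anti-isomorphism $\mathcal{T}/\sim\to nV$, so $\mathcal{T}/\sim$ is isomorphic to the opposite group of $nV$, which is isomorphic to $nV$ itself via inversion.
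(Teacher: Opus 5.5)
Your proof is correct and takes the paper's route: the paper's proof only exhibits $(Y,X,\sigma^{-1})$ as the inverse of $(X,Y,\sigma)$, and takes associativity and the identity from Lemma~\ref{associativity}. Your additional check that $\cdot$ descends to $\mathcal{T}/\sim$ (because $\bar{\pi}$ of a product is the composite $g'\circ g$) supplies a step the paper leaves implicit.
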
 
\begin{proof} 
	All we need is to find an inverse of  $(X, Y, \sigma)$, 
	that is  $(Y, X, \sigma^{-1})$.  
\end{proof}

\begin{proposition} 
	$\bar{\pi}$  induces a group anti-isomorphism  
	$\pi : \mathcal{T}/\sim \, \to {\rm Homeo} \, \mathcal{C}^n$  
	onto its image.   
	In particular, 
	$nV = \pi(\mathcal{T})$  is a group.  
\end{proposition}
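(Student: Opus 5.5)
The plan is to check three things: that $\bar{\pi}$ is an anti-homomorphism of monoids, that it is constant on $\sim$-classes, and that the induced map on $\mathcal{T}/\sim$ is injective. Surjectivity onto $nV$ holds by definition, since $nV := \bar{\pi}(\mathcal{T})$.

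\textbf{Step 1 (the product formula).} Take $(X, Y, \sigma)$ and $(X', Y', \sigma')$ with images $g$ and $g'$. The block $Y \wedge X'$ refines $Y$, and $Y$ is admissible for $g^{-1}$. It also refines $X'$, which is admissible for $g'$. So by Lemma \ref{Lem:Admissibility} both $g^{-1}(Y \wedge X')$ and $g'(Y \wedge X')$ are dyadic blocks.

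Next, fix a piece $Z = Y_j \cap X'_k$ of $Y \wedge X'$. Then $g^{-1}$ maps $Z$ affinely onto $g^{-1}(Z)$ inside some $X_i$, and $g'$ maps $Z$ affinely onto $g'(Z)$. Each of these affine maps preserves coordinates and orientation, and such maps are closed under composition and inversion. Hence the triple on the right-hand side of (\ref{Eq:Product}), with $\tau$ the induced matching of pieces, maps $g^{-1}(Z)$ to $g'(Z)$ by the map $g' \circ g$ restricted to $g^{-1}(Z)$. This gives $\bar{\pi}((X,Y,\sigma)\cdot(X',Y',\sigma')) = g' \circ g$, which is the anti-homomorphism property; the identity triple maps to $id$.

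The main obstacle here is bookkeeping. One has to check that the permutation $\tau$, written loosely as $\sigma' \circ \sigma$, really sends the index of $g^{-1}(Z)$ to the index of $g'(Z)$. I would avoid this by letting the permutation be determined by the maps themselves, as the authors do in Lemma \ref{associativity}: the triple assigns each source piece $g^{-1}(Z)$ to the target piece $g'(Z)$.

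\textbf{Step 2 (well-definedness and injectivity).} By definition, $(X,Y,\sigma)\sim(X',Y',\sigma')$ if and only if $g^{-1}g' = id$, that is, if and only if $\bar{\pi}(X,Y,\sigma) = \bar{\pi}(X',Y',\sigma')$. So $\sim$ is exactly the kernel relation of $\bar{\pi}$. Consequently:
\begin{itemize}
\item $\bar{\pi}$ descends to a well-defined map $\pi$ on $\mathcal{T}/\sim$;
\item $\pi$ is injective;
\item by Step 1, $\sim$ is compatible with the multiplication, since if $\bar{\pi}(a) = \bar{\pi}(a')$ and $\bar{\pi}(b) = \bar{\pi}(b')$, then $\bar{\pi}(ab) = \bar{\pi}(a'b')$. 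So the group structure on $\mathcal{T}/\sim$ is well defined and $\pi$ respects it as an anti-homomorphism.
\end{itemize}

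\textbf{Step 3 (conclusion).} The map $\pi$ is a bijective anti-homomorphism from the group $\mathcal{T}/\sim$ onto $\bar{\pi}(\mathcal{T}) = nV$. Its image is therefore closed under composition, it contains $id$, and by Step 1 it contains the inverse $\bar{\pi}(Y, X, \sigma^{-1}) = g^{-1}$ of each element $g = \bar{\pi}(X, Y, \sigma)$. Hence $nV$ is a subgroup of ${\rm Homeo}\,\mathcal{C}^n$, and $\pi$ is an anti-isomorphism onto it, as claimed.
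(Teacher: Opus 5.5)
Your proposal is correct and follows the paper's proof: the anti-homomorphism property comes from observing that the product triple carries each piece $g^{-1}(Z)$ onto $g'(Z)$ by first applying $g$ and then $g'$. Injectivity comes from the fact that $\sim$ identifies exactly the triples with the same image under $\bar{\pi}$, which the paper phrases as $\pi$ having trivial kernel, and your explicit check that $\sim$ is compatible with the multiplication fills in a detail the paper leaves implicit.
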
 
\begin{proof} 
	Suppose  $(X, Y, \sigma), \,  (X', Y', \sigma') \in \mathcal{T}$,   
	and let  $\pi(X, Y, \sigma) = g$  and  $\pi(X', Y', \sigma') = g'$.      
	Then, 
	\begin{equation*} 
		\pi(X, Y, \sigma) \circ \pi(X',Y', \sigma') = g \circ g'.  
	\end{equation*} 
	by definition.  
	On the other hand, 
	\begin{equation*}
	        (X, Y, \sigma_g) \cdot  (X', Y', \sigma_{g'}) 
	        = (g^{-1}(Y \wedge X'), g'(Y \wedge X'), \sigma_{gg'})  
	\end{equation*} 
	and 
	$g^{-1}(Y \wedge X') $ is taken to $g'(Y \wedge X')$  by 
	a map first  $g$  and then  $g'$, 
	thus 
	\begin{equation*} 
		\pi((X, Y, \sigma_g) \cdot  (X', Y', \sigma_{g'})) = g' \circ g.  
	\end{equation*}  
	This shows that  $\pi$  is a group anti-homomorphism.  
	Moreover, 
	the kernel of  $\pi$  consists of a single element  $[(X, X, id)] \in \mathcal{T}/\sim$, 
	$\pi$  is injective.  
\end{proof} 

\begin{definition}[Brin-Thompson group]
	$nV$  is called the $n$-dimensional Thompson group 
	or Brin-Thompson group.  
	The original Thompson group  $V$  is the  $n = 1$ version.  
	Due to literature, 
	we define the multiplication of  $g$  and  $h$  in  $nV$  by 
	\begin{equation*} 
		gh = h \circ g 
	\end{equation*} 
	where the right hand side is regarded as a composition  
	in  ${\rm Homeo} \, \mathcal{C}^n$.  
\end{definition} 

\begin{remark} 
	$nV$  is known to be finitely generated by  \cite{Brin}, 
	and hence admits a word metric together with its quasi-isometry class.  
\end{remark} 

Although  $X$  is admissible for  $gh$,  
$X$  may not be admissible for  $g$.  
For example, 
if  $h = g^{-1}$,  
then any  $X$  is admissible for  $gh$,  
however, not all dyadic blocks  $X$  are 
admissible for nontrivial  $g$.   

If  $X$  is admissible for  $g$  and  $g(X)$  is admissible for  $h$, 
then  
\begin{equation}\label{Eq:CompositionDB} 
	(gh)(X) = h(g(X)). 
\end{equation}   
Otherwise, 
the admissibility of  $X$  for  $gh$  does not immediately imply 
this identity, 
and we should keep in mind the difference between our notation and 
a usual one for the group action.  
The next proposition gives a useful criterion for   
(\ref{Eq:CompositionDB})  to be held.  

\begin{proposition}\label{Prop:CompositionDB}  
	If  $X$  is admissible for both  $g$  and  $gh$, 
	then  $g(X)$  is admissible for  $h$,  
	and in particular, 
	the identity  {\em (\ref{Eq:CompositionDB})}  holds. 
\end{proposition}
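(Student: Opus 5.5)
The idea is to work blockwise: fix a single subblock $X_i$ of $X$ and show that $h$ restricted to $g(X_i)$ is a single coordinate-preserving, orientation-preserving affine map onto a dyadic subblock. With the convention $gh = h\circ g$ as maps of $\mathcal{C}^n$, we have $h = (gh)\circ g^{-1}$ on $\mathcal{C}^n$.

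Concretely:

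\begin{enumerate}
\item Since $X$ is admissible for $g$, the restriction $g|_{X_i}$ is the canonical affine map $X_i \to g(X_i)$. That is, $g|_{X_i} = \varphi_{g(X_i)}\circ\varphi_{X_i}^{-1}$ in terms of the rescaling maps $\varphi_*$, and $g(X_i)$ is a dyadic subblock.
\item Since $X$ is admissible for $gh$, the map $(h\circ g)|_{X_i}$ is likewise the canonical affine map from $X_i$ onto the dyadic subblock $(gh)(X_i)$.
\item On the subblock $g(X_i)$ we therefore get
\[
h|_{g(X_i)} = (h\circ g)|_{X_i}\circ (g|_{X_i})^{-1} = \varphi_{(gh)(X_i)}\circ\varphi_{g(X_i)}^{-1}.
\]
This is a composition of coordinate-preserving, orientation-preserving affine rescalings, so it is again such a map, carrying the dyadic subblock $g(X_i)$ onto the dyadic subblock $(gh)(X_i)$.
\item The collection $g(X)=\{g(X_i)\}$ is a dyadic block, because $X$ is admissible for $g$. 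Likewise $\{(gh)(X_i)\}$ is a dyadic block, because $X$ is admissible for $gh$.
\item Define $\sigma''$ by matching the indexings of these two blocks. The triple $(g(X), (gh)(X), \sigma'')$ lies in $\mathcal{T}$, and by step 3 and the uniqueness of the homeomorphism induced by a triple, it induces exactly $h$. Hence $g(X)$ is admissible for $h$.
\item Reading off images blockwise, $h(g(X_i)) = (gh)(X_i)$ for each $i$, which is identity (\ref{Eq:CompositionDB}).
\end{enumerate}

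The only point needing care is step 3: checking that the composition of canonical affine rescalings $\varphi_*$ and their inverses is again of the type allowed in the definition of $\bar{\pi}$, i.e.\ affine, preserving coordinates and orientation. This is immediate from the explicit coordinatewise form $x_k \mapsto 2^{-a_k}x_k + b_k$ of each $\varphi_*$.

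It is also worth recording the bookkeeping subtlety: $h$ is a priori defined on all of $\mathcal{C}^n$, but admissibility concerns whether its restriction to each piece is a single canonical affine map. The identity $h = (gh)\circ g^{-1}$ on $\mathcal{C}^n$ is exactly what allows us to transfer the admissibility of $X$ for $g$ and for $gh$ to that of $g(X)$ for $h$.
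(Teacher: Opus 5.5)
Your proof is correct and follows the same blockwise route as the paper, which observes that $h(g(X_i)) = (gh)(X_i)$ as subsets of $\mathcal{C}^n$, so each subblock of $g(X)$ is carried onto a subblock of $(gh)(X)$. Your step 3 adds something the paper's proof leaves implicit: it checks explicitly that $h|_{g(X_i)} = \varphi_{(gh)(X_i)}\circ\varphi_{g(X_i)}^{-1}$ is an affine map of the required type, not merely a bijection between the right subblocks.
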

\begin{proof} 
	Let  $X = \{ X_i \}$  and  $g(X) = Y = \{ Y_j \}$.  
	Then, 
	for each  $i$, 
	there exists  $j$  such that  $g(X_i) = Y_j$.  
	On the other hand, 
	if we let  $(gh)(X) = Z = \{Z_k \}$, 
	then 
	$(gh)(X_i) = h(g(X_i)) = h(Y_j)$  as a subset of $\mathcal{C}^n$.   
	Thus for each  $j$,  
	there exists  $k$  such that  
	$h(Y_j) = Z_k$,     
	and therefore  $g(X)$  is admissible for  $h$.     
\end{proof}

%
%
\medskip 
\section{Torsion}\label{Sec:Torsion} 

In this section, 
we study the torsion elements in $nV$  and prove Theorem \ref{Thm:Main1} in the introduction 
which claims the torsion locally finiteness of  $nV$.

\subsection{Power of elements} 

We start with the diagrammatic structure of the power of an element in  $nV$.  

\begin{lemma} 
	Let  $g \in nV$  be an element represented by a triple  $(X, g(X), \sigma)$.  
	Then, 
	the target dyadic block of  $(X, g(X), \sigma)^i$  for  $i \in \mathbb{N}$  is 
	\begin{equation*} 
		\overbrace{g(g(g( \cdots (g(g}^{i}(X) \wedge X) \wedge X)\wedge \cdots ) \wedge X) \wedge X).  
	\end{equation*} 
\end{lemma}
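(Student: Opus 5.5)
Let $T_i$ denote the target dyadic block of $(X, g(X), \sigma)^i$ computed with the product \eqref{Eq:Product}. Writing $T_i$ for the claimed expression, the recursion it encodes is
\begin{equation*}
	T_1 = g(X), \qquad T_{i+1} = g(T_i \wedge X).
\end{equation*}
The proof is by induction on $i$, using only the definition of the product and Lemma \ref{Lem:Admissibility}.

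For $i = 1$ there is nothing to prove: the target block of $(X, g(X), \sigma)$ is $g(X)$. For the inductive step, assume $(X, g(X), \sigma)^i = (S_i, T_i, \tau_i)$, where this triple represents $g^i$ and $T_i$ has the stated form. Apply \eqref{Eq:Product} with $(X', Y', \sigma') = (X, g(X), \sigma)$. This gives
\begin{equation*}
	(S_i, T_i, \tau_i)\cdot(X, g(X), \sigma) = \bigl((g^i)^{-1}(T_i \wedge X),\; g(T_i \wedge X),\; \sigma\circ\tau_i\bigr).
\end{equation*}
The target is $g(T_i \wedge X)$, which is exactly the $(i+1)$-fold nested expression. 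Since the power is associative by Lemma \ref{associativity}, the product may be bracketed as $(\cdot)^i \cdot (\cdot)$, so this computation is legitimate.

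The only point needing care is well-definedness. First, $g(T_i \wedge X)$ must make sense, i.e.\ $T_i \wedge X$ must be admissible for $g$. This follows from Lemma \ref{Lem:Admissibility}(2): $X$ is admissible for $g$, so $T_i \wedge X$ is too. Second, $(g^i)^{-1}(T_i\wedge X)$ must make sense. Here $T_i$ is admissible for $(g^i)^{-1}$, because $(S_i, T_i, \tau_i)$ represents $g^i$ and its inverse is $(T_i, S_i, \tau_i^{-1})$. Lemma \ref{Lem:Admissibility}(2) again shows that $T_i \wedge X$ is admissible for $(g^i)^{-1}$. These are the same validity remarks the paper makes right after \eqref{Eq:Product}, so I expect no genuine obstacle. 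The main thing to be careful about is the multiplication convention: the paper defines products in the opposite order from composition in ${\rm Homeo}\,\mathcal{C}^n$. Since all factors here are the same element $g$, this does not affect the result.
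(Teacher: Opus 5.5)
Your proof is correct and follows the paper's approach. The paper checks the case $i=2$, noting that $g(X)\wedge X$ is admissible for $g$ because $X$ is, and leaves the rest to an induction. That induction is exactly the recursion $T_{i+1}=g(T_i\wedge X)$, obtained by bracketing the power as $(\cdot)^i\cdot(\cdot)$, which you carry out explicitly together with the admissibility checks from Lemma \ref{Lem:Admissibility}(2).
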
 
\begin{proof} 
	When  $i = 2$, 
	the common refinement  $g(X) \wedge X$  
	to define  $(X, g(X), \sigma)^2$  is admissible for  $g$  since  
	$X$  is admissible for  $g$, 
	and the target dyadic block of  $(X, g(X), \sigma)^2$  is  $g(g(X) \wedge X)$.  
	The general formula can be verified by induction.  
\end{proof} 

\begin{lemma}\label{Lem:Power}  
	Let  $X$  be a dyadic block of length  $m$.  
	Then an element  $g \in nV$  represented by 
	an identical block pair  $(X, X, \sigma) \, \in \mathcal{T}$  is torsion 
	for any permutation  $\sigma$  in 
	$\mathfrak{S}_m = {\rm Aut} \{ 1, 2, \cdots, m \}$.  
\end{lemma}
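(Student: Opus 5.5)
The plan is to show that $g$ permutes the subblocks of $X$ as whole sets, so that a suitable power of $g$ must be the identity. Write $X = \{X_1, \dots, X_m\}$. By definition of the triple $(X, X, \sigma)$, the homeomorphism $g$ satisfies $g(X_i) = X_{\sigma(i)}$ for every $i$. On each $X_i$, the restriction $g|_{X_i}$ is the unique coordinate-preserving, orientation-preserving affine map onto $X_{\sigma(i)}$. Equivalently, $g|_{X_i} = \varphi_{\sigma(i)} \circ \varphi_i^{-1}$, where $\varphi_j : \mathcal{C}^n \to X_j$ is the canonical rescaling homeomorphism.

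Now let $N$ be the order of $\sigma$ in $\mathfrak{S}_m$; it is finite and divides $m!$. I will show by induction on $k$ that the $k$-fold composite $g^{\circ k}$ maps $X_i$ onto $X_{\sigma^k(i)}$, with
\begin{equation*}
g^{\circ k}|_{X_i} = \varphi_{\sigma^k(i)} \circ \varphi_i^{-1}.
\end{equation*}
The inductive step only needs the cancellation
\begin{equation*}
(\varphi_{\sigma^{k+1}(i)} \circ \varphi_{\sigma^k(i)}^{-1}) \circ (\varphi_{\sigma^k(i)} \circ \varphi_i^{-1}) = \varphi_{\sigma^{k+1}(i)} \circ \varphi_i^{-1}.
\end{equation*}
Taking $k = N$ gives $g^{\circ N}|_{X_i} = \varphi_i \circ \varphi_i^{-1} = \mathrm{id}_{X_i}$ for each $i$. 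Since the $X_i$ cover $\mathcal{C}^n$, this shows $g^{\circ N} = \mathrm{id}$, so $g$ is torsion with order dividing $N$.

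It is also worth phrasing this in terms of the paper's triple calculus, to show it is consistent with the product in $\mathcal{T}$. By the preceding lemma, the target block of $(X, X, \sigma)^i$ is obtained by iterating $Z \mapsto g(Z \wedge X)$ starting from $g(X) = X$. Since $X \wedge X = X$ and $g(X) = X$, every iterate is again $X$. Hence $(X,X,\sigma)^i = (X, X, \sigma^i)$, and for $i = N$ this is $(X, X, \mathrm{id})$, which represents the identity. The anti-isomorphism $\pi$ does not affect orders, so either argument suffices.

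The only point that needs care is the identification $g|_{X_i} = \varphi_{\sigma(i)} \circ \varphi_i^{-1}$. I expect this to be the main obstacle, though a mild one. It rests on the uniqueness of the coordinate-preserving, orientation-preserving affine map between two dyadic subblocks, which is exactly condition (2) in the definition of $\bar{\pi}$. Once that is granted, the composition formula and hence the conclusion are immediate.
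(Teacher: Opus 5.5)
Your proof is correct and takes the same approach as the paper, whose one-line proof simply notes that $g(X) = X$ and that $\sigma$ has finite order. Your check that the pieces $g|_{X_i} = \varphi_{\sigma(i)}\circ\varphi_i^{-1}$ compose to $g^{N}|_{X_i} = \mathrm{id}_{X_i}$ (equivalently $(X,X,\sigma)^N = (X,X,\mathrm{id})$) spells out the step the paper leaves implicit.
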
  
\begin{figure}[htp]  
	\begin{center} 
	\begin{tikzpicture}[baseline=-0.65ex, thick, scale=0.3]
                 \draw (0, -5) to (0,5);
                 \draw (0, -5) to (10, -5);
                 \draw (10, -5) to (10,5);
                 \draw (10, 5) to (0, 5);
                 \draw (5, -5) to (5,5);
                 \draw (5, 0) to (10,0);
                 \draw (2.5, 0) node{1};
                 \draw (7.5, 2.5) node{2};
                 \draw (7.5, -2.5) node{3};
        \end{tikzpicture}\quad
        \begin{tikzpicture}[baseline=-0.65ex, thick, scale=0.3]
                 \draw (5, 0) node{$\mapsto$};
        \end{tikzpicture}\quad
        \begin{tikzpicture}[baseline=-0.65ex, thick, scale=0.3]
                 \draw (0, -5) to (0,5);
                 \draw (0, -5) to (10, -5);
                 \draw (10, -5) to (10,5);
                 \draw (10, 5) to (0, 5);
                 \draw (5, -5) to (5,5);
                 \draw (5, 0) to (10,0);
                 \draw (2.5, 0) node{3};
                 \draw (7.5, 2.5) node{1};
                 \draw (7.5, -2.5) node{2};
        \end{tikzpicture}
		\caption{An example of  $(X, X, \sigma)$} 
		\label{Fig:Power}  
	\end{center} 
\end{figure} 
\begin{proof} 
	This follows from the fact that  $g(X) = X$  and  $\sigma$  is of finite order.   
\end{proof} 

The following proposition is motivated by 
a suggestion from Collin Bleak through private communication with the second author.  

\begin{proposition}\label{Prop:IdenticalBrock} 
 	If $g \in nV$  is torsion of order  $p$,
	then there exists a dyadic block  $B$ 
	such that  $g(B) = B$. 
\end{proposition}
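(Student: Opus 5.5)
The plan is to build $B$ as a common refinement of the images of one admissible block under all the powers of $g$. Choose a dyadic block $X$ admissible for $g$. The difficulty is that $X$ need not be admissible for $g^2, \dots, g^{p-1}$ in a way compatible with composition. So first I will produce a single block $X_0$ that is admissible for every power $g^i$, $0 \le i \le p-1$, with $g^i(X_0)$ computed iteratively.

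\textbf{Step 1: an iteratively admissible block.} Set
\begin{equation*}
	X_0 = X_{(0)} \wedge X_{(1)} \wedge \cdots \wedge X_{(p-1)},
\end{equation*}
where $X_{(i)}$ is some block admissible for $g^i$ (for instance the source block of a triple representing $g^i$). By Lemma \ref{Lem:Admissibility} (1),(2), $X_0$ is admissible for every $g^i$, $0\le i\le p-1$. Then Proposition \ref{Prop:CompositionDB}, applied inductively with $h=g$, shows that $g^{i}(X_0)$ is admissible for $g$ and
\begin{equation*}
	g^{i+1}(X_0) = g(g^i(X_0)), \qquad 0 \le i \le p-2.
\end{equation*}
In the last step, $X_0$ is admissible for $g^{p-1}$ and for $g^{p} = id$, which is trivially admissible for every block. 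Proposition \ref{Prop:CompositionDB} therefore gives $g(g^{p-1}(X_0)) = g^p(X_0) = X_0$.

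\textbf{Step 2: symmetrize.} Define
\begin{equation*}
	B = X_0 \wedge g(X_0) \wedge g^2(X_0) \wedge \cdots \wedge g^{p-1}(X_0).
\end{equation*}
Each factor $g^i(X_0)$ is admissible for $g$ by Step 1. Iterating Lemma \ref{Lem:Admissibility} (2) and (3), $B$ is admissible for $g$ and
\begin{equation*}
	g(B) = g(X_0) \wedge g^2(X_0) \wedge \cdots \wedge g^{p-1}(X_0) \wedge g^p(X_0).
\end{equation*}
Since $g^p(X_0) = X_0$, and $\wedge$ is commutative and associative, this is exactly $B$.

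\textbf{Main obstacle.} The delicate point is the bookkeeping in Step 1. The paper warns that admissibility for a product does not by itself give the iterative formula (\ref{Eq:CompositionDB}). So I must check carefully that at each stage the hypotheses of Proposition \ref{Prop:CompositionDB} hold: $g^i(X_0)$ must be admissible for $g$, and $g^i(X_0)$ must be admissible for $g\cdot g = g^2$ relative to the previous power. Equivalently, $X_0$ must be admissible for both $g^i$ and $g^{i+1}$, which is why $X_0$ is taken to refine the source blocks of all powers at once.

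Finally, I must make sure Lemma \ref{Lem:Admissibility} (3) extends to finitely many factors. This follows by induction using associativity of $\wedge$, together with (2) to keep every partial refinement admissible.
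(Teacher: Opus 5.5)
Your proof is correct and is essentially the paper's argument. The paper takes $B$ to be the target block of the $p$-th power of a triple $(X, g(X), \sigma)$, which as a collection of subblocks is $\{X_{i_1} \cap g(X_{i_2}) \cap \cdots \cap g^{p-1}(X_{i_p})\}$; this is the same orbit common refinement you write as $X_0 \wedge g(X_0) \wedge \cdots \wedge g^{p-1}(X_0)$, and invariance comes from the same cyclic shift using $g^p = id$. The only difference is bookkeeping. The paper gets that the intersections are genuine dyadic subblocks from the iterated power formula, so it never needs $X$ to be admissible for higher powers. Your Step 1 instead pre-refines $X$ so that each $g^i(X_0)$ is itself a dyadic block, and Lemma \ref{Lem:Admissibility}(3) then applies directly.
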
 
\begin{proof} 
	Assume that a dyadic block  $X$  is admissible for  $g$.  
	Then,  
	the triple  $(X, g(X), \sigma)$  represents  $g$  and 
	referring to Lemma \ref{Lem:Power}, 
	we let  $B$  be  
	\begin{align*}
		B & = 
		\overbrace{g(g(g( \cdots (g(g}^{p}
		(X) \wedge X) \wedge X)\wedge \cdots ) \wedge X) \wedge X) \\
		& = \{ X_{i_1} \cap g(X_{i_2}) \cap \cdots \cap 
		                      g^{p-1}(X_{i_p})   \, ; \, 
			1 \leq i_1, i_2, \cdots, i_p \leq |X| \}.  
	\end{align*} 
	The last representation immediately implies  $g(B) = B$  since  $g^p = id$.  
\end{proof} 

For the power of a product of two torsion elements, 
we introduce some notations.  
Assume that  $g, h \in nV$  are torsion elements.  
Then, 
both of them can be represented by some identical block pairs  
by Proposition \ref{Prop:IdenticalBrock}.  
Namely, 
there exist dyadic blocks  $X$  and  $Y$  satisfying 
$g(X) = X$  and  $h(Y) = Y$.  
The product  $gh$  can be represented by a block pair 
\begin{equation*} 
	(X, X) \cdot (Y, Y) 
	= (g^{-1} (X \wedge Y), h(X \wedge Y)), 
\end{equation*} 
and if we let 
\begin{equation*} 
	(g^{-1} (X \wedge Y), h(X \wedge Y))^i
	= (S^i,T^i),    
\end{equation*} 
for  $i \geq 1$, 
then by the definition of the multiplication on  $\mathcal{T}$,  
\begin{align*} 
	(S^{i}, T^{i}) & = (S^{i-1}, T^{i-1}) \cdot (g^{-1}(X \wedge Y), h(X \wedge Y)) \\
	& = ((gh)^{1-i}(T^{i-1} \wedge g^{-1}(X \wedge Y)), (gh)(T^{i-1} \wedge g^{-1}(X \wedge Y)))  
\end{align*} 
for  $i \geq 2$.   
$S$  simply stands for a source block and  $T$ for a target one.  
	
\begin{remark}\label{Rm:Tadmissible}  
	Since  $T^{i-1} \wedge g^{-1}(X \wedge Y)$  for  $i \geq 2$  is 
	admissible for  $g$,  
	by Proposition \ref{Prop:CompositionDB}, 
	we have 
	\begin{equation*} 
		T^i
		= \overbrace{h(g( \cdots (h(g}^{2i}(h(X \wedge Y) \wedge g^{-1} (X \wedge Y))) \wedge \cdots ) 
		\wedge g^{-1} (X \wedge Y)) \wedge g^{-1} (X \wedge Y)).  
	\end{equation*} 
	Hence, in particular,  
	$T^i$  is admissible for  $h^{-1}$.  
\end{remark} 
	
Replacing  $gh$  by  $hg$,   
we can represent  $(hg)^i$  by  
\begin{equation*} 
	((Y, Y) \cdot (X, X))^i = (h^{-1}(Y \wedge X), g(Y \wedge X))^i = (D^i, R^i).  
\end{equation*}  
Here similarly,
$D$  simply stands for a domain block and  $R$  a range one.  

\begin{remark}\label{Rm:Radmissible} 
	Likewise,  
	$R^i$  is admissible for  $g^{-1}$.  
\end{remark} 

The following three lemmas will be established under 
the assumption that only when  $g$  and  $h$  are torsion.   

\begin{lemma}\label{Lem:Filtration} 
	$T^{i+1} \succeq T^{i}$  and  $R^{i+1} \succeq R^{i}$   for all  $i \geq 1$.   
\end{lemma}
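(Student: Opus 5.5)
We prove $T^{i+1} \succeq T^{i}$ directly from the recursive formula for the target blocks. The statement for $R^i$ then follows by the same argument with the roles of $g$ and $h$ (and of $X$ and $Y$) interchanged, using Remark \ref{Rm:Radmissible} in place of Remark \ref{Rm:Tadmissible}.

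Write $A = g^{-1}(X \wedge Y)$. This is a valid block: $X$ is admissible for $g^{-1}$ because $g(X)=X$, so $X\wedge Y$ is admissible for $g^{-1}$ by Lemma \ref{Lem:Admissibility}(2). From the product formula,
\begin{equation*}
T^{1} = (gh)(A) = h(X\wedge Y), \qquad T^{i+1} = (gh)(T^{i} \wedge A) \quad (i \geq 1).
\end{equation*}
The key observation is that $T^{i+1}$ is $gh$ applied to a refinement of $A$. Since $A$ is admissible for $gh$ and $T^i\wedge A \succeq A$, Lemma \ref{Lem:Admissibility}(1) gives
\begin{equation*}
T^{i+1} = (gh)(T^i \wedge A) \succeq (gh)(A) = T^1 .
\end{equation*}
This settles the base case $T^2 \succeq T^1$.

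For the induction step, assume $T^{i} \succeq T^{i-1}$. Then $T^{i}\wedge A \succeq T^{i-1}\wedge A$, because every piece $T^i_k\cap A_l$ lies inside some $T^{i-1}_j\cap A_l$. By Remark \ref{Rm:Tadmissible} the block $T^{i-1}\wedge A$ is admissible for $g$, and it equals $g^{-1}(g(T^{i-1})\wedge (X\wedge Y))$; hence it is admissible for $gh$, with image $T^i$. Applying Lemma \ref{Lem:Admissibility}(1) to $gh$ with the coarser block $T^{i-1}\wedge A$ gives
\begin{equation*}
T^{i+1} = (gh)(T^i\wedge A) \succeq (gh)(T^{i-1}\wedge A) = T^{i}.
\end{equation*}
So the monotonicity propagates.

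The main obstacle is admissibility bookkeeping, since $g$, $h$ and $gh$ are applied to blocks only when those blocks are admissible. The monotonicity of Lemma \ref{Lem:Admissibility}(1) is available only when the coarser block is admissible for the map in question, so at each step I must confirm that $T^{i-1}\wedge A$ is admissible for $gh$ and not merely that its refinement $T^i\wedge A$ is. This is exactly what the definition of the product guarantees, since $T^{i}$ is by construction $(gh)(T^{i-1}\wedge A)$. Should that identification be in doubt, I would fall back on Proposition \ref{Prop:CompositionDB}: the block is admissible for $g$ by Remark \ref{Rm:Tadmissible}, its image under $g$ refines $X\wedge Y$ and hence $Y$, and $Y$ is admissible for $h$.

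The torsion hypothesis enters only through the existence of the invariant blocks $X$ and $Y$ (Proposition \ref{Prop:IdenticalBrock}), which is what makes $A$ and the subsequent blocks admissible in the first place.
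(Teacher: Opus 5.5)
Your proof is correct, but it takes a different route from the paper. The paper uses no induction. It invokes associativity of the product on $\mathcal{T}$ (Lemma \ref{associativity}) to factor $(gh)^{i+1}$ on the other side, $(S^{i+1},T^{i+1}) = (S^1,T^1)\cdot(S^i,T^i)$. This gives $T^{i+1} = (gh)^i(T^1\wedge S^i) \succeq (gh)^i(S^i) = T^i$ in one line, by Lemma \ref{Lem:Admissibility}(1) applied to the block $S^i$, which is admissible for $(gh)^i$.

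You instead keep the defining recursion $T^{i+1} = (gh)(T^i\wedge A)$ and propagate monotonicity by induction. At each step you apply Lemma \ref{Lem:Admissibility}(1) only to the single element $gh$, using the coarser block $T^{i-1}\wedge A$. The paper's route is shorter. However, it relies on associativity holding as an equality of block pairs, not merely of the represented elements, and the paper only sketches that check. Yours needs nothing beyond the definition of the product and Lemma \ref{Lem:Admissibility}, and it also yields $T^{i+1}\succeq T^1$ along the way. The paper's own proof of Lemma \ref{Lem:FiltrationII} works with the defining recursion, exactly as you do for $T^i$.

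One small point: your claim that $T^{i-1}\wedge A$ equals $g^{-1}(g(T^{i-1})\wedge(X\wedge Y))$ is not justified. $T^{i-1}$ itself need not be admissible for $g$; already $T^1 = h(X\wedge Y)$ is only guaranteed to refine $Y$. The detour is unnecessary, though. Since $T^{i-1}\wedge A\succeq A$ and $A$ is admissible for $gh$, Lemma \ref{Lem:Admissibility}(2) gives the needed admissibility directly. Your fallback argument is also sound: the $g$-image of $T^{i-1}\wedge A$ refines $X\wedge Y\succeq Y$, and $Y$ is admissible for $h$.
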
 
\begin{proof}  
	By the associativity of the product on  $\mathcal{T}$, 
	\begin{equation*} 
		(S^{i+1}, T^{i+1}) = (S^1, T^1) \cdot (S^i, T^i).   
	\end{equation*} 
	Thus, 
	\begin{equation*} 
		T^{i+1} = (gh)^i(T^1 \wedge S^i) \succeq (gh)^i(S^i) = T^i.    
	\end{equation*} 
	The same computation works to prove  $R^{i+1} \succeq R^i$.  
\end{proof} 

\begin{lemma}\label{Lem:FiltrationII} 
	$S^{i+1} \succeq S^{i}$  and  $D^{i+1} \succeq D^{i}$   for all  $i \geq 1$.   
\end{lemma}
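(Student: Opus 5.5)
The plan is to mirror the proof of Lemma \ref{Lem:Filtration}, but this time peel off the extra factor $(S^1,T^1)$ on the right instead of on the left. By the associativity in Lemma \ref{associativity}, I will write
\begin{equation*}
	(S^{i+1}, T^{i+1}) = (S^i, T^i) \cdot (S^1, T^1).
\end{equation*}
The product formula (\ref{Eq:Product}) then expresses the source block of the right hand side as the preimage of $T^i \wedge S^1$ under the element represented by $(S^i,T^i)$, namely $(gh)^i$. This gives
\begin{equation*}
	S^{i+1} = (gh)^{-i}(T^i \wedge S^1).
\end{equation*}

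Next I will justify that this expression is a valid block map and compare it with $S^i$. Since $(S^i,T^i)$ represents $(gh)^i$, the triple $(T^i,S^i)$ with the inverse permutation represents $(gh)^{-i}$. Hence $T^i$ is admissible for $(gh)^{-i}$, and as dyadic blocks $(gh)^{-i}(T^i) = S^i$. By Remark \ref{Rm:CommonRefinement}, $T^i \wedge S^1 \succeq T^i$. Lemma \ref{Lem:Admissibility} (1), applied to $(gh)^{-i}$, then shows that $T^i \wedge S^1$ is admissible for $(gh)^{-i}$ and that
\begin{equation*}
	S^{i+1} = (gh)^{-i}(T^i \wedge S^1) \succeq (gh)^{-i}(T^i) = S^i.
\end{equation*}

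The statement for $D^i$ follows by the identical computation with $(D^i,R^i)$ and the element $hg$ in place of $(S^i,T^i)$ and $gh$.

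The only delicate point I expect is bookkeeping: the paper's convention $gh = h \circ g$ makes $\bar\pi$ an anti-homomorphism, so one must check that the source block of a product of triples is the preimage of the middle refinement under the element represented by the left factor, and that this element is indeed $(gh)^i$ rather than its reverse. Once the identification $(gh)^{-i}(T^i) = S^i$ is secured, the argument reduces to Lemma \ref{Lem:Admissibility} (1). Notably, the torsion hypothesis is not needed for this step.
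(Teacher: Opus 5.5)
Your proposal is correct and matches the paper's proof essentially verbatim: the same decomposition $(S^{i+1},T^{i+1}) = (S^i,T^i)\cdot(S^1,T^1)$, the same chain $S^{i+1} = (gh)^{-i}(T^i\wedge S^1) \succeq (gh)^{-i}(T^i) = S^i$, and the same symmetry argument for $D^i$. Your appeal to Lemma \ref{Lem:Admissibility}(1), using that $T^i$ is admissible for $(gh)^{-i}$, makes explicit a step the paper leaves implicit. You are also right that the torsion hypothesis is not used at this step.
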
 
\begin{proof}  
	By the associativity of the product on  $\mathcal{T}$, 
	\begin{equation*} 
		(S^{i+1}, T^{i+1}) = (S^i, T^i) \cdot (S^1, T^1).   
	\end{equation*} 
	Thus, 
	\begin{equation*} 
		S^{i+1} = (gh)^{-i}(T^i \wedge S^1) \succeq (gh)^{-i}(T^i) = S^i.   
	\end{equation*} 
	The same computation works for  $D^{i+1} \succeq D^i$.  
\end{proof} 

\begin{lemma}\label{Lem:TandR} 	
        With the notations as above, 
        we have 
	\begin{equation*}
		h^{-1}(T^{i+1}) \succeq R^i  
		\quad \text{and} \quad 
		g^{-1}(R^{i+1}) \succeq T^i  
	\end{equation*} 
	for any  $i \geq 1$.  
\end{lemma}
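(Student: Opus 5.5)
I would prove both inequalities simultaneously by induction on $i$. The tools are the recursive descriptions of $T^{i}$ and $R^{i}$ from Remark \ref{Rm:Tadmissible} and Remark \ref{Rm:Radmissible}, together with Lemma \ref{Lem:Admissibility} and Lemma \ref{Lem:Filtration}. By Proposition \ref{Prop:CompositionDB}, as in Remark \ref{Rm:Tadmissible}, the recursions read
\begin{equation*}
T^{i+1} = h\bigl(g(T^{i} \wedge g^{-1}(X \wedge Y))\bigr), \qquad
R^{i+1} = g\bigl(h(R^{i} \wedge h^{-1}(X \wedge Y))\bigr).
\end{equation*}
Hence $h^{-1}(T^{i+1}) = g(T^{i} \wedge g^{-1}(X\wedge Y))$, and symmetrically for $g^{-1}(R^{i+1})$. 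The identities $g(X)=X$ and $h(Y)=Y$ give $g^{\pm1}(X\wedge Y) = X \wedge g^{\pm1}(Y)$ and $h^{\pm1}(X\wedge Y) = h^{\pm1}(X)\wedge Y$, via Lemma \ref{Lem:Admissibility}(3). In particular $T^1 = h(X)\wedge Y \succeq Y$ and $R^1 = X \wedge g(Y) \succeq X$.

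For the base case $i=1$, note that $T^1 \succeq Y$ and $g^{-1}(X\wedge Y) \succeq X$. Therefore $T^1 \wedge g^{-1}(X\wedge Y) \succeq X \wedge Y$. Since $X\wedge Y$ is admissible for $g$, Lemma \ref{Lem:Admissibility}(1) lets me apply $g$ and get $h^{-1}(T^2) \succeq g(X\wedge Y) = R^1$. The inequality $g^{-1}(R^2) \succeq T^1$ follows by the symmetric argument, exchanging the roles of $g,X$ and $h,Y$.

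For the inductive step, assume $h^{-1}(T^{i+1}) \succeq R^{i}$. Applying $h$ to this, which is legitimate because $R^i$ is a refinement whose relevant admissibility comes from Remark \ref{Rm:Radmissible}, gives $T^{i+1} \succeq h(R^i)$. Lemma \ref{Lem:Filtration} gives $T^{i+1} \succeq T^1 \succeq Y$, and again $g^{-1}(X\wedge Y)\succeq X$. Combining these,
\begin{equation*}
T^{i+1} \wedge g^{-1}(X\wedge Y) \succeq h(R^i) \wedge X \wedge Y = h(R^i) \wedge h(h^{-1}(X\wedge Y)) = h\bigl(R^i \wedge h^{-1}(X\wedge Y)\bigr),
\end{equation*}
using Lemma \ref{Lem:Admissibility}(3) for the last step. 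Applying $g$, with Lemma \ref{Lem:Admissibility}(1), turns the left-hand side into $h^{-1}(T^{i+2})$ and the right-hand side into $R^{i+1}$. This gives $h^{-1}(T^{i+2}) \succeq R^{i+1}$. The second inequality is handled symmetrically.

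The main obstacle is the bookkeeping of admissibility. Whenever I push a refinement relation through $g$ or $h$, the coarser block must be admissible for that map; this is exactly the direction Lemma \ref{Lem:Admissibility}(1) requires. I also need Lemma \ref{Lem:Admissibility}(3) to hold for the specific blocks I intersect. Concretely, I must check that $h(R^i)$ makes sense, that $R^i\wedge h^{-1}(X\wedge Y)$ is admissible for $h$, and that $h(R^i\wedge h^{-1}(X\wedge Y))$ is admissible for $g$. I would verify these from the explicit nested-intersection forms of $T^i$ and $R^i$ together with Remarks \ref{Rm:Tadmissible} and \ref{Rm:Radmissible}, before running the induction.
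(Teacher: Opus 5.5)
Your base case is correct, but the inductive step fails where you apply $h$ to the hypothesis $h^{-1}(T^{i+1}) \succeq R^i$. By Lemma~\ref{Lem:Admissibility}(1), pushing a refinement through $h$ requires the \emph{coarser} block, here $R^i$, to be admissible for $h$. Remark~\ref{Rm:Radmissible} only says that $R^i$ is admissible for $g^{-1}$, and in general $R^i$ is not admissible for $h$.

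For instance, take $n=1$. Let $g$ swap $[0,\frac{1}{2}]$ and $[\frac{1}{2},1]$, with $X$ these two halves. Let $h$ cycle $[0,\frac{1}{2}] \to [\frac{1}{2},\frac{3}{4}] \to [\frac{3}{4},1] \to [0,\frac{1}{2}]$, with $Y$ these three intervals. Then $X \wedge Y = Y$ and $R^1 = g(Y)$ contains $[\frac{1}{2},1]$. On this interval $h$ is not affine; its image $[\frac{3}{4},1] \cup [0,\frac{1}{2}]$ is not even a dyadic subinterval.

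So $h(R^i)$ is undefined, and your rewriting $h(R^i) \wedge h(h^{-1}(X\wedge Y)) = h(R^i \wedge h^{-1}(X \wedge Y))$ via Lemma~\ref{Lem:Admissibility}(3) is unavailable. The verification you defer to your last paragraph therefore cannot succeed. The same problem occurs in the symmetric half, where $T^i$ need not be admissible for $g$.

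There is also a smaller slip of the same type: $g^{\pm 1}(X \wedge Y) = X \wedge g^{\pm1}(Y)$ presumes $Y$ is admissible for $g$, which is not given. The facts you actually use, $T^1 \succeq Y$ and $g^{-1}(X\wedge Y) \succeq X$, follow directly from Lemma~\ref{Lem:Admissibility}(1).

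The missing idea is to ensure that whatever block you push through $h$ is admissible for $h$. The paper does this by strengthening the induction to $h^{-1}(T^{i+1}) \succeq R^i \wedge Y$. Its right-hand side refines $Y$, so it is admissible for $h$.

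Alternatively, you can keep your weaker statement and pull back through $h^{-1}$ instead of pushing forward through $h$. Put $S^1 = g^{-1}(X\wedge Y)$ and $D^1 = h^{-1}(X\wedge Y)$.
\begin{itemize}
\item The block $T^{i+1}\wedge S^1$ refines $T^{i+1}$, which is admissible for $h^{-1}$ by Remark~\ref{Rm:Tadmissible}. Hence $h^{-1}(T^{i+1}\wedge S^1) \succeq h^{-1}(T^{i+1}) \succeq R^i$.
\item It also refines $X \wedge Y$, by Lemma~\ref{Lem:Filtration} and $S^1 \succeq X$. Since $X \wedge Y$ is admissible for $h^{-1}$, this gives $h^{-1}(T^{i+1}\wedge S^1) \succeq D^1$.
\item Hence $h^{-1}(T^{i+1}\wedge S^1) \succeq R^i \wedge D^1$. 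Now the coarser block $R^i\wedge D^1$ refines $D^1$, so it is admissible for $h$.
\item Applying $h$ gives $T^{i+1}\wedge S^1 \succeq h(R^i\wedge D^1) \succeq X\wedge Y$.
\item Applying $g$ then yields $h^{-1}(T^{i+2}) \succeq g(h(R^i \wedge D^1)) = R^{i+1}$, as you intended.
\end{itemize}
The symmetric inequality is repaired the same way.
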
 

\begin{proof} 
	To see the first claim, 
        we prove a stronger version that  
        \begin{equation*} 
       		h^{-1}(T^{i+1}) \succeq R^{i} \wedge Y 
	\end{equation*} 
        by induction on $i$.  
        
        Since  $g^{-1}(X \wedge Y) \succeq g^{-1}(X) = X$  and  
        $h(X \wedge Y) \succeq h(Y) = Y$,  
        $g^{-1}(X \wedge Y) \wedge h(X \wedge Y) \succeq X \wedge Y$.  
        Also, 
        $X \wedge Y$  is admissible for  $gh$  and  $g$.  
        In particular, 
        \begin{align*} 
	        T^2 
	        & = (gh)(g^{-1} (X \wedge Y) \wedge h(X \wedge Y)) \\ 
	        & = h(g(g^{-1} (X \wedge Y) \wedge h(X \wedge Y)) \\ 
	        & \succeq h(g(X \wedge Y)), 
        \end{align*} 
        and hence  $h^{-1}(T^2) \succeq g(X \wedge Y) = R^1$.  
        Also, 
        since  
        \begin{equation*} 
		T^2 = h(g(g^{-1} (X \wedge Y) \wedge h(X \wedge Y))) 
		\succeq h(g(g^{-1}(X \wedge Y))) 
		\succeq h(X \wedge Y) 
		\succeq Y,   
        \end{equation*} 
        we have 
        \begin{equation}\label{Eq:Y} 
	        h^{-1}(T^2) \succeq  g (X \wedge Y) \wedge h^{-1}(Y) = R^1 \wedge Y.  
        \end{equation} 
        This completes the first step of the induction. 
        
        Assume that 
        \begin{equation*} 
		h^{-1}(T^{i}) \succeq R^{i-1} \wedge Y
	\end{equation*} 
	is true for  $i \geq 2$.  
	Notice that the right hand side is admissible for  $h$.  
	Since  $S = g^{-1}(X \wedge Y)$  is admissible for  $g$,  
	$T^{i+1} = (gh)(T^{i} \wedge S) = h(g(T^{i} \wedge S))$.   
	Moreover, 
	\begin{align*} 
	        h^{-1}(T^{i+1}) & = g(T^{i}  \wedge S) \\ 
			& \succeq g(h(R^{i-1} \wedge Y) \wedge S)  \quad \text{(by induction hypthesis)} \\ 
			&  \succeq g(h(R^{i-1})).  
	\end{align*} 
	On the other hand, 
	since  $T^{i+1} \succeq T^2$  by Lemma \ref{Lem:Filtration},  
	we have 
	\begin{equation*}
               h^{-1}(T^{i+1})  \succeq  h^{-1}(T^2)  \succeq R^1 \wedge Y
         \end{equation*}
         by (\ref{Eq:Y}).  
	Combining the above two, 
         	\begin{align*} 
	        h^{-1}(T^{i+1}) &  \succeq g(h(R^{i-1})) \wedge (R \wedge Y)  \\
	                         & =  (hg)(R^{i-1}) \wedge (hg)(D) \wedge Y \\
	                         & = (hg)(R^{i-1} \wedge D) \wedge Y = R^i \wedge Y,  
	         \end{align*} 
         and we have established the first claim. 
         
         The argument above is to analyze how the target block  $T^{i+1}$  of  $(gh)^{i+1}$  
         is mapped by  $h^{-1}$.  
         The second claim can be verified by replacing the role of  $gh$  by  $hg$.  
\end{proof}

\subsection{Elements of finite order}

From now on, 
we proceed our discussion under the assumption 
that the product  $gh$  and hence  $hg$  are torsion elements of order  $p$.  
The goal for the moment is to establish the diagram of bijections: 
\begin{equation*} 
	\begin{CD} 
		S^p @>{gh}>> T^p @= S^p \\ 
		@. @A{h}AA  @VV{g}V \\
		D^p @>>{hg}> R^p @= D^p 
	\end{CD}  
\end{equation*} 
Vertical arrows in the diagram make sense because of 
Remark \ref{Rm:Tadmissible} and \ref{Rm:Radmissible}.  

\begin{proposition}\label{Prop:TandR}  
	Assume that  $gh \in nV$  is a torsion element of order  $p$.  
	Then,  
	$T^p$  is admissible for  $g$  and so is  $R^p$  for  $h$,  
	and moreover,  
	\begin{equation*} 
		g(T^p) = R^p  \quad \text{and}  \quad  h(R^p) = T^p.   
	\end{equation*} 
\end{proposition}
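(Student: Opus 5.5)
Since $(gh)^p = id$, the pair $(S^p, T^p)$ represents the identity, so $T^p = S^p$ (this is the equality in the top row of the diagram); likewise $R^p = D^p$. My plan is to combine this with Lemma \ref{Lem:TandR}, the filtrations of Lemmas \ref{Lem:Filtration} and \ref{Lem:FiltrationII}, and a length count.

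First, I would get admissibility of $T^p$ for $g$. By Lemma \ref{Lem:TandR} with $i=p-1$ (for $p\ge 2$; the case $p=1$ is handled directly since then $h=g^{-1}$), $g^{-1}(R^{p}) \succeq T^{p-1}$. Using $T^p = S^p$, I would rather work with $S^p = (gh)^{1-p}(T^{p-1}\wedge S^1)$, which refines $S^1 = g^{-1}(X\wedge Y)$. Since $S^1$ is admissible for $g$ (as $g(S^1) = X\wedge Y$), Lemma \ref{Lem:Admissibility}(1) shows that $T^p = S^p \succeq S^1$ is admissible for $g$. Symmetrically, $R^p = D^p \succeq D^1 = h^{-1}(Y\wedge X)$, and $D^1$ is admissible for $h$, so $R^p$ is admissible for $h$.

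Next, I would prove the two refinement inequalities. Applying $g$ to $g^{-1}(R^{p+1}) \succeq T^p$, which Lemma \ref{Lem:TandR} gives with $i=p$, I expect $R^{p+1} \succeq g(T^p)$. The admissibility needed here comes from Remark \ref{Rm:Radmissible} together with the previous step. Because $(hg)^p = id$, the sequence $R^i$ should stabilize: $(D^{p+1},R^{p+1}) = (D^p,R^p)\cdot(D^1,R^1)$, with $(D^p,R^p) = (R^p,R^p,id)$. Hence $R^{p+1} = (hg)(R^p\wedge D^1)$, which is only a refinement of $R^p$, not equal to it. So instead of stabilization I would use $i=p-1$ directly and argue with lengths.

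The key count is as follows. $T^p$ is the target of $(gh)^p$, and $g(T^p)$ has the same length as $T^p$. I would show $g(T^p) \succeq R^p$ and $h(R^p) \succeq T^p$, obtained from Lemma \ref{Lem:TandR} combined with the fact that $T^p \succeq$ every $T^i$ ($i\le p$) and $R^p \succeq$ every $R^i$. This yields $|T^p| \ge |R^p| \ge |T^p|$. A refinement between dyadic blocks of equal length is an equality, since each piece of the finer block lies in a unique piece of the coarser one and the pieces cover $\mathcal{C}^n$. Therefore $g(T^p) = R^p$ and $h(R^p) = T^p$.

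The main obstacle is the inequality $g(T^p) \succeq R^p$ itself. Lemma \ref{Lem:TandR} naturally gives $g^{-1}(R^{i+1}) \succeq T^i$, which points the wrong way. To reverse the direction I would mimic the inductive proof of Lemma \ref{Lem:TandR}: show $g(T^{i+1}) \succeq R^{i}\wedge X$, starting from $T^{i+1} = h(g(T^i\wedge S^1))$. Evaluating at $i=p$ and using $R^{p+1}$ versus $R^p$ as above should then close the argument.
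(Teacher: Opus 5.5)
Your admissibility step is correct, and simpler than the paper's. Since $T^p=S^p\succeq S^1=g^{-1}(X\wedge Y)$ and $R^p=D^p\succeq D^1$, Lemma~\ref{Lem:Admissibility}(1) settles it.

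The gap is in the two inequalities $g(T^p)\succeq R^p$ and $h(R^p)\succeq T^p$ on which your length count rests. You assert they follow from Lemma~\ref{Lem:TandR} and the filtrations. But that lemma only controls $h^{-1}(T^{i+1})$ and $g^{-1}(R^{i+1})$, and nothing in your argument relates $g(T^p)$ to $h^{-1}(T^p)$.

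Your proposed repair, proving $g(T^{i+1})\succeq R^i\wedge X$ by imitating the induction, is not well posed. For $i+1<p$ the block $T^{i+1}$ is in general not admissible for $g$: Remark~\ref{Rm:Tadmissible} only gives admissibility for $h^{-1}$, and already $T^1=h(X\wedge Y)$ can fail to be $g$-admissible. Moreover, the final ``evaluation at $i=p$'' would require $T^{p+1}=T^p$, which is exactly the stabilization you had just discarded.

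Discarding it was the real error: $R^{p+1}$ does equal $R^p$. Bracket the other way: $(D^{p+1},R^{p+1})=(D^1,R^1)\cdot(D^p,R^p)$ has target $(hg)^p(R^1\wedge D^p)$. Using $(hg)^p=id$, $D^p=R^p$ and $R^p\succeq R^1$ (Lemma~\ref{Lem:Filtration}), this equals $R^1\wedge R^p=R^p$. Likewise $T^{p+1}=(gh)^p(T^1\wedge S^p)=T^p$.

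Lemma~\ref{Lem:TandR} with $i=p$ then gives $h^{-1}(T^p)\succeq R^p$ and $g^{-1}(R^p)\succeq T^p$. These ``wrong-way'' inequalities are all you need. They give $|T^p|\ge|R^p|\ge|T^p|$, so both refinements are equalities. Hence $h(R^p)=T^p$ and $g(T^p)=R^p$, with admissibility included, and there is no need to reverse any direction.

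The paper supplies the missing link differently. Since $T^p=S^p$ is $gh$-invariant, Proposition~\ref{Prop:CompositionDB} gives $g(T^p)=h^{-1}(T^p)$. Lemma~\ref{Lem:TandR} at $i=p-1$ together with $g(S^p)\succeq D^1$ then gives $g(T^p)\succeq R^{p-1}\wedge D^1$. Applying $hg$, which fixes $g(T^p)$, yields $g(T^p)\succeq R^p$. Symmetrically $g^{-1}(R^p)=h(R^p)\succeq T^p$, and the two combine to equality.
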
 
\begin{proof} 
	Since  $(gh)^p = id$, 
	$S^p = T^p$  and they are invariant under the action of  $gh$.            
	Also, 
	$T^p$  is admissible for both  $(gh)^{-1}$  and  $h^{-1}$.  
	Thus we have 
	$T^p = ((gh)^{-1})(T^p) = g^{-1}(h^{-1}(T^p))$, 
	or equivalently  $g(T^p) = h^{-1}(T^p)$.  
	In particular, 
	$T^p \, (= S^p)$  is admissible for  $g$  and  $h^{-1}$.  
	For further computation, 
	noticing that since  $S^i \succeq S^2$  by Lemma \ref{Lem:FiltrationII}, 
	we have   
	\begin{equation}\label{Eq:SucceqRelation} 
		g(S^i) \succeq g(S^2) = g((gh)^{-1}(T \wedge S)) \succeq h^{-1} (X \wedge Y) = D^1 
	\end{equation} 
	for all  $i \geq 2$.  
	Then, 
	by Lemma \ref{Lem:TandR} and (\ref{Eq:SucceqRelation}), 
	we obtain 
       	\begin{equation*} 
		g(S^p) = g(T^p) = h^{-1}(T^p) \succeq R^{p-1} \wedge D^1,  
	\end{equation*} 
	and hence, 
	applying  $hg$  on both sides of the last relation, 
	we have 
	\begin{equation*} 
		g(T^p) \succeq g(h(R^{p-1} \wedge D^1)) = R^p. 
	\end{equation*} 
	Replacing the role of  $T^p$  by  $R^p$  in the argument above  
	and noting that  $(hg)(R^p) = g(h(R^p)) = R^p$, 
	we also have 
	\begin{equation*} 
		g^{-1}(R^p) = h(R^p) \succeq T^{p}. 
	\end{equation*} 
	Combining these two formulae, 
	we have 
	\begin{equation*} 
		g(T^p)= R^p. 
	\end{equation*} 
	
	Applying the same argument for  $hg$  and we obtain   
	\begin{equation*} 
		h(R^p)= T^p. 
	\end{equation*} 
\end{proof} 

\begin{lemma}\label{Lem:Flip}
	If  $gh$  \emph{(} and hence  $hg$ \emph{)}  have order  $p$, 
	then 
	\begin{equation*} 
		g(R^p) = h^{-1}(R^p) 
		\quad \text{and} \quad 
		h(T^p) = g^{-1}(T^p).   
	\end{equation*} 	
\end{lemma}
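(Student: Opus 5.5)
Lemma \ref{Lem:Flip} should follow from Proposition \ref{Prop:TandR} together with the invariance of $T^p = S^p$ under $gh$ and of $R^p = D^p$ under $hg$. The idea is to transport these invariances across the diagram of bijections established there.

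For the first identity, the starting point is $g(T^p) = R^p$ from Proposition \ref{Prop:TandR}. Since $(gh)^p = id$, the proof of Proposition \ref{Prop:TandR} gives $g(T^p) = h^{-1}(T^p)$, so we also have $h^{-1}(T^p) = R^p$. By Remark \ref{Rm:Radmissible}, $R^p$ is admissible for $g^{-1}$. By Proposition \ref{Prop:TandR}, $R^p$ is admissible for $h$ with $h(R^p) = T^p$. Hence $R^p$ is admissible for $h^{-1}$ if and only if $T^p$ is admissible for $h^{-1}$, and the latter holds by Remark \ref{Rm:Tadmissible}.

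Next I would compute $h^{-1}(R^p)$. Since $R^p = D^p$ is invariant under $hg$, and $R^p$ is admissible for both $h$ and $hg$, Proposition \ref{Prop:CompositionDB} gives
\begin{equation*}
R^p = (hg)(R^p) = g(h(R^p)) = g(T^p).
\end{equation*}
Consequently $h^{-1}(R^p) = h^{-1}(g(T^p))$. I would like to rewrite this as $g(g^{-1}h^{-1}(g(T^p)))$, using that $g^{-1}h^{-1} = (hg)^{-1}$ and that $R^p$ is invariant under $(hg)^{-1}$. Concretely, $R^p$ is admissible for $(hg)^{-1}$ with $(hg)^{-1}(R^p) = R^p$, and it is admissible for $h^{-1}$. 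Proposition \ref{Prop:CompositionDB}, applied to the factorization $(hg)^{-1} = h^{-1}g^{-1}$ in the paper's left-to-right convention, then gives $g^{-1}(h^{-1}(R^p)) = R^p$. Hence $h^{-1}(R^p)$ is admissible for $g^{-1}$ with image $R^p$. Applying $g$ yields $h^{-1}(R^p) = g(R^p)$, and in particular $R^p$ is admissible for $g$.

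The second identity $h(T^p) = g^{-1}(T^p)$ is symmetric. Use invariance of $T^p$ under $(gh)^{-1} = g^{-1}h^{-1}$, admissibility of $T^p$ for $g^{-1}$ (which follows since $g(R^p) = \ldots$, or via $h(R^p) = T^p$ and the $R$-invariance), and Proposition \ref{Prop:CompositionDB} to get $h(g^{-1}(T^p)) = T^p$. Applying $h^{-1}$ then gives $g^{-1}(T^p) = h^{-1}(T^p)$ after inversion; swapping roles with the first argument produces the stated form.

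The main obstacle is bookkeeping of admissibility. Proposition \ref{Prop:CompositionDB} requires admissibility for the first factor and for the product, so at each step I must first certify admissibility for $h^{-1}$ or $g^{-1}$ using Remarks \ref{Rm:Tadmissible}, \ref{Rm:Radmissible} and the bijections of Proposition \ref{Prop:TandR}. I would also have to watch the reversed multiplication convention $gh = h\circ g$ carefully.
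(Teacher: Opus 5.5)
Your key step fails because it reverses the order of a product. In any group $(hg)^{-1}=g^{-1}h^{-1}$, not $h^{-1}g^{-1}$. With the paper's rule $(ab)(Z)=b(a(Z))$, invariance of $R^p$ under $(hg)^{-1}$ reads $h^{-1}(g^{-1}(R^p))=R^p$, i.e.\ $g^{-1}(R^p)=h(R^p)$. Both sides of that are just $T^p$ by Proposition~\ref{Prop:TandR}, so you have gained nothing. The identity you actually wrote, $g^{-1}(h^{-1}(R^p))=R^p$, says $R^p$ is invariant under $h^{-1}g^{-1}=(gh)^{-1}$. Given admissibility, that is equivalent to the claim $g(R^p)=h^{-1}(R^p)$ itself, so the argument is circular. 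Your second half makes the same reversal (you write $(gh)^{-1}=g^{-1}h^{-1}$): what is known is $(gh)(T^p)=T^p$, whereas $h(T^p)=g^{-1}(T^p)$ amounts to $(hg)(T^p)=T^p$. Separately, your ``if and only if'' for admissibility of $R^p$ under $h^{-1}$ is not a valid inference, although the conclusion does hold because $R^p\succeq X\wedge Y\succeq Y$. The paper's proof takes the same route and has the same defect. It derives $(hg)^{-1}(R^p)=R^p$ from $h^{-1}=g(hg)^{-1}$, which again only restates Proposition~\ref{Prop:TandR}, and then asserts the conclusion.

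No bookkeeping can repair this, because the statement is false. Write points of $\mathcal{C}^2$ as pairs of infinite binary words, and let $w\mathcal{C}$ denote the words with prefix $w$.
\begin{itemize}
\item In $2V$ let $g$ flip the first digit of the first coordinate, so $g(X)=X$ for $X=\{0\mathcal{C}\times\mathcal{C},\,1\mathcal{C}\times\mathcal{C}\}$.
\item Let $h$ be $(0a,0b)\mapsto(1a,0b)\mapsto(a,1b)\mapsto(0a,0b)$. This has order $3$ and cycles $Y=\{U_0,U_1,U_2\}$, where $U_0=0\mathcal{C}\times0\mathcal{C}$, $U_1=1\mathcal{C}\times0\mathcal{C}$ and $U_2=\mathcal{C}\times1\mathcal{C}$.
\item Then $gh$ is the identity on $U_1$ and $4$-cycles $00\mathcal{C}\times0\mathcal{C}\to0\mathcal{C}\times1\mathcal{C}\to01\mathcal{C}\times0\mathcal{C}\to1\mathcal{C}\times1\mathcal{C}$, so $p=4$.
\end{itemize}
Computing directly from the definitions gives
$T^4=\{00\mathcal{C}\times0\mathcal{C},\,01\mathcal{C}\times0\mathcal{C},\,U_1,\,0\mathcal{C}\times1\mathcal{C},\,1\mathcal{C}\times1\mathcal{C}\}$ and
$R^4=g(T^4)=\{U_0,\,10\mathcal{C}\times0\mathcal{C},\,11\mathcal{C}\times0\mathcal{C},\,0\mathcal{C}\times1\mathcal{C},\,1\mathcal{C}\times1\mathcal{C}\}$.
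Now $g(R^4)=T^4$ has $U_1$ as a member, while $h^{-1}(R^4)$ has $h^{-1}(U_0)=U_2$ as a member and splits $U_1$. Hence $g(R^4)\neq h^{-1}(R^4)$, even though every block involved is admissible. Similarly $h(T^4)\neq g^{-1}(T^4)$, and $R^4\neq T^4$, which also contradicts Proposition~\ref{Prop:Identity}. The missing step is therefore not an admissibility technicality: the ``flipped'' invariances, $R^p$ under $gh$ and $T^p$ under $hg$, simply need not hold.
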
 
\begin{proof}  
	By the identity  $hg(hg)^{p-1} = id$  and Proposition \ref{Prop:TandR}, 
	we have 
	\begin{equation*} 
		R^p = h^{-1}(T^p) 
		= (g(hg)^{-1})(T^p) 
		= (hg)^{-1}(g(T^p)) 
		= (hg)^{-1}(R^p), 
	\end{equation*} 
	hence  $R^p$  is invariant under the action of  $hg$.  
	Since  $R^p$  is admissible for both  $g$  and  $h$, 
	the conclusion follows. 
	
	If we start with the identity  $gh(gh)^{p-1} = id$, 
	we can verify the second claim.  
\end{proof} 

\begin{proposition}\label{Prop:Identity} 
	Let  $p$  be the order of  $gh \in nV$.  
	Then  $T^p$  and  $R^p$  are invariant under the action of both  $g$  and  $h$. 
	Moreover, 
	\begin{equation*} 
		(D^p =) \; R^p = T^p \; (= S^p)   
	\end{equation*} 
	holds.  
\end{proposition}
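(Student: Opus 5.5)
Write $T = T^p = S^p$ and $R = R^p = D^p$. We already know $gh$ preserves $T$, $hg$ preserves $R$, $g(T) = R$, $h(R) = T$ (Proposition \ref{Prop:TandR}), and $g(R) = h^{-1}(R)$, $h(T) = g^{-1}(T)$ (Lemma \ref{Lem:Flip}). I first aim to show $R = T$; invariance under $g$ and $h$ then follows from $g(T) = R$ and $h(R) = T$.

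To get $R = T$, I will prove the two refinements $T \succeq R$ and $R \succeq T$ separately and conclude by antisymmetry of $\succeq$ on dyadic blocks. Two blocks covering $\mathcal{C}^n$ that refine each other coincide, since each subblock of one is contained in a subblock of the other and vice versa, and nonempty open sets force equality. For $T \succeq R$ I would first try to show $T \succeq X \wedge Y$ and $R \succeq X \wedge Y$, and then that each of $T$ and $R$ is built from $X \wedge Y$ by the same \emph{symmetric} iterated operation, so that each refines the other.

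The first part is immediate. $T = T^p \succeq T^1 = h(X\wedge Y) \succeq h(Y) = Y$ and $T = S^p \succeq S^1 = g^{-1}(X \wedge Y) \succeq X$ by Lemmas \ref{Lem:Filtration} and \ref{Lem:FiltrationII}, so $T \succeq X \wedge Y$. Similarly $R \succeq R^1 = g(X\wedge Y) \succeq X$ and $R = D^p \succeq D^1 = h^{-1}(X\wedge Y)\succeq Y$, so $R \succeq X \wedge Y$. For the comparison itself, $T \wedge R$ is admissible for $g$ and $h$, since $T$ and $R$ are each admissible for both by Proposition \ref{Prop:TandR} and Lemma \ref{Lem:Flip}. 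By Lemma \ref{Lem:Admissibility}(3),
\begin{equation*}
g(T \wedge R) = g(T) \wedge g(R) = R \wedge h^{-1}(R), \qquad h(T \wedge R) = h(T)\wedge h(R) = g^{-1}(T) \wedge T.
\end{equation*}

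The key step, and the expected main obstacle, is to show that $T$ is $g$-invariant, i.e.\ $g(T) = T$, which is the same as $R = T$. The plan is a minimality argument. By Lemma \ref{Lem:Filtration} and the formula in Remark \ref{Rm:Tadmissible}, $T$ should be the coarsest block refining $g^{-1}(X\wedge Y)$ that is invariant under $gh$; likewise $R$ should be the coarsest block refining $h^{-1}(X \wedge Y)$ that is invariant under $hg$. The identity $R = h^{-1}(T)$ with $T$ $gh$-invariant already shows $R$ is $hg$-invariant. I would then show that $T \wedge R$ is invariant under $gh$ and refines $g^{-1}(X\wedge Y)$, and deduce $T \succeq T\wedge R$ from minimality, which gives $T \succeq R$. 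Symmetrically, $R \succeq T$.

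Invariance of $T\wedge R$ under $gh$ uses $gh(T) = T$ together with $gh(R) = h(g(R)) = h(h^{-1}(R)) = R$ from Lemma \ref{Lem:Flip}, applied through Proposition \ref{Prop:CompositionDB} and Lemma \ref{Lem:Admissibility}(3). The delicate point is justifying the minimality characterization of $T^p$, namely that every $gh$-invariant refinement of $S^1$ refines $T^p$. I would prove this by induction on $i$: if $Z \succeq S^1$ is $gh$-invariant, then $Z \succeq T^i$ for all $i$, by unwinding the recursion $T^{i+1} = (gh)(T^i\wedge S^1)$. This gives $R = T$, and invariance under $g$ and $h$ follows as noted.
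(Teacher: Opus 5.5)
\textbf{The decisive step runs in the wrong direction.} Your minimality principle is correct: inducting on $T^{i+1} = (gh)(T^i \wedge S^1)$, every $gh$-invariant $Z \succeq S^1$ satisfies $Z \succeq T^i$ for all $i$. But it says that every competitor \emph{refines} $T$. Applied to $Z = T \wedge R$ it yields only $T \wedge R \succeq T$, which is trivial, and never $T \succeq T \wedge R$.

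To get $R \succeq T$ you must use $R$ itself as the competitor, i.e.\ show $(gh)(R) = R$ and $R \succeq S^1$. Symmetrically, $T \succeq R$ needs $(hg)(T) = T$ and $T \succeq D^1$. All of these come from Lemma~\ref{Lem:Flip}; for instance $g(R) \succeq g(X) = X$ and $g(R) = h^{-1}(R) \succeq h^{-1}(Y) = Y$ give $R \succeq S^1$. The first identity of that lemma, $g(R^p) = h^{-1}(R^p)$, is equivalent to $(gh)(R^p) = R^p$, which carries essentially the whole content of the proposition.

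That identity is not established. The proof of Lemma~\ref{Lem:Flip} only shows $(hg)(R^p) = R^p$. Since $(hg)(Z) = g(h(Z))$, this gives $h(R^p) = g^{-1}(R^p)$, not $g(R^p) = h^{-1}(R^p)$, and dually for $T^p$. Used in the correct direction, your principle does give a short proof of Proposition~\ref{Prop:TandR}: $g(T^p)$ is an $hg$-invariant refinement of $D^1$ and $h(R^p)$ is a $gh$-invariant refinement of $S^1$, whence $g(T^p) \succeq R^p \succeq h^{-1}(T^p) = g(T^p)$. It cannot give more.

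\textbf{No argument can close this gap, because the proposition is false.} With it, Corollary~\ref{Cor:TwoGenerator} also fails, already inside $V \le nV$. The paper's own induction fails for the same reason, since it also invokes $g(R^p) = h^{-1}(R^p)$ from Lemma~\ref{Lem:Flip}.

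Here is a counterexample. Split $\mathcal{C}^n$ into three dyadic subblocks $P_1, P_2, P_3$, and let $\rho$ permute them cyclically by the canonical affine maps. Take $t \in nV$ of infinite order supported in $P_1$. Put $t_i = \rho^{i-1} t \rho^{1-i}$ (composition of maps); these are supported on $P_i$ and commute. Let $v = t_1 t_2^{-1}$, $g = \rho$ and $h = v\rho$.
\begin{itemize}
\item $h$ has order $3$, since $(v\rho)^3 = v \cdot \rho v \rho^{-1} \cdot \rho^2 v \rho^{-2} = (t_1 t_2^{-1})(t_2 t_3^{-1})(t_3 t_1^{-1}) = 1$.
\item The products $h \circ g = v\rho^2$ and $g \circ h = \rho (v \rho^2) \rho^{-1}$ have order $3$ by the same cancellation.
\item Yet $\langle g, h \rangle$ contains $v = h \circ g^{-1}$, which has infinite order.
\end{itemize}
A dyadic block invariant under both $g$ and $h$ would embed $\langle g, h\rangle$ in a finite symmetric group. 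So the conclusion fails for every choice of $X$ and $Y$. Since Proposition~\ref{Prop:TandR} does hold by the argument above, the broken ingredient is Lemma~\ref{Lem:Flip}.
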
 
\begin{proof} 
	Since 
	\begin{equation*} 
		(hg)(R^p) = (hg)(D^p) = R^p = (hg)(R^{p-1} \wedge D^1),
	\end{equation*} 
	$R^p = R^{p-1} \wedge D^1$  and we have 
	\begin{equation*} 
		R^p 
		= R^{p-1} \wedge D^1 \succeq R^1 \wedge D^1 
		= g(X \wedge Y) \wedge h^{-1}(X \wedge Y) 
		\succeq X \wedge Y.  
	\end{equation*} 
	Since  $X \wedge Y$  is admissible for  $g$  and  $h^{-1}$, 
	\begin{align*} 
		g(R^p) & \succeq g(X \wedge Y) = R^1 \\
		h^{-1}(R^p) & \succeq h^{-1}(X \wedge Y) = D^1, 
	\end{align*} 
	and hence by Lemma \ref{Lem:Flip}
	\begin{equation}\label{Eq:g(R^p)}  
		g(R^p) = h^{-1}(R^p) \succeq R^1 \wedge D^1.  
	\end{equation} 
	Assume that 
	\begin{equation}\label{Eq:Hypothesis} 
		g(R^p) = h^{-1}(R^p) \succeq R^i \wedge D^1.  
	\end{equation} 
	is true for  $1 \leq i \leq p-1$.  
	Since  $R^i \wedge D^1$  is admissible for  $h$,  
	acting  $h$  on both sides of (\ref{Eq:Hypothesis}), 
	we have 
	\begin{equation*} 
		R^p \succeq h(R^i \wedge D^1).   
	\end{equation*}  
	Since 
	\begin{equation*} 
		h(R^i \wedge D^1) \succeq  h(D^1) = h(h^{-1}(X \wedge Y)) = X \wedge Y 
	 \end{equation*} 
	 and  $X \wedge Y$  is admissible for  $h$,  
	 $h(R^i \wedge D^1)$  and  $R^p$  are admissible for  $g$.  
	 Thus we have 
	 \begin{equation*} 
	 	g(R^p) \succeq g(h(R^i \wedge D^1)) 
		= (hg)(R^i \wedge D^1) = R^{i+1}.  
	\end{equation*}  
	Recall that  $g(R^p) = h^{-1}(R^p) \succeq D^1$, 
	and (\ref{Eq:Hypothesis}) is true when  $i = p-1$  by induction.  
	Thus we obtain 
	\begin{equation*} 
		g(R^p) = h^{-1}(R^p) \succeq R^{p-1} \wedge D^1 
		=(gh)^{-1}(R^p) = R^p.   
	\end{equation*} 
	Then  $g(R^p) = R^p$  because  $|g(R^p)| = |R^p|$.  
	
	The second identity follows from the same argument starting 
	from the identity 
	\begin{equation*} 
		(gh)(T^p) = (gh)(S^p) = T^p = (gh)(T^{p-1} \wedge S^1).  
	\end{equation*} 
	Also by Proposition \ref{Prop:TandR}, 
	\begin{equation*} 
		T^p = g(T^p) = R^p.  
	\end{equation*}  
\end{proof} 

\begin{remark} 
	The order  $p$  in the above argument can be replaced 
	by any multiple of  $p$.   
\end{remark} 

\begin{corollary}\label{Cor:TwoGenerator} 
 	Let  $g, h$  and  $gh$  be torsion elements in  $nV.$ 
	Then, 
	the group  $\langle g, \, h \rangle$  generated by  $g$  and  $h$  is a finite group.    
\end{corollary}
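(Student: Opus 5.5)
The idea is to produce a single dyadic block that both generators preserve, and then bound the group by a symmetric group.

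First, invoke Proposition \ref{Prop:IdenticalBrock} to get dyadic blocks $X$ and $Y$ with $g(X)=X$ and $h(Y)=Y$. Form $(S^i,T^i)$ and $(D^i,R^i)$ as in the preceding subsection. Let $p$ be the order of $gh$, which is also the order of $hg$ since the two are conjugate. Proposition \ref{Prop:Identity} then gives one dyadic block
\[
B := T^p = S^p = R^p = D^p
\]
that is admissible for both $g$ and $h$, with $g(B)=B$ and $h(B)=B$.

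Next I would show that every element of $\langle g,h\rangle$ acts on $B$. By Lemma \ref{Lem:Admissibility} and Proposition \ref{Prop:CompositionDB}, if $B$ is admissible for $a$ and $a(B)=B$, and $B$ is admissible for $b$ with $b(B)=B$, then $B$ is admissible for $ab$ and $(ab)(B)=b(a(B))=B$. The same holds for inverses: if $a(B)=B$, then $B$ is admissible for $a^{-1}$ with $a^{-1}(B)=B$. Inducting on word length in $g^{\pm1},h^{\pm1}$, every $w\in\langle g,h\rangle$ is represented by a triple $(B,B,\sigma_w)$ with $\sigma_w\in\mathfrak{S}_{|B|}$.

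This gives a map $\langle g,h\rangle\to\mathfrak{S}_{|B|}$, $w\mapsto\sigma_w$. It is injective, because the triple $(B,B,\sigma)$ determines the homeomorphism uniquely: each subblock $B_i$ is sent affinely onto $B_{\sigma(i)}$ by the canonical rescaling. Hence $|\langle g,h\rangle|\le |B|!$, so the group is finite. If one prefers, one can also check that $w\mapsto\sigma_w$ is an (anti-)homomorphism, since $(B,B,\sigma)\cdot(B,B,\sigma')=(B,B,\sigma'\circ\sigma)$ by the product formula; but injectivity of the assignment already suffices for finiteness.

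The main obstacle is already absorbed by Proposition \ref{Prop:Identity}. The remaining care is the bookkeeping step: admissibility of $B$ for a product really does follow from admissibility for each factor, using that $B$ is fixed by each factor. The hypothesis that $gh$ is torsion is needed only to obtain $p$, and hence the common invariant block $B$.
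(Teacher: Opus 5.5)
The step that fails is your appeal to Proposition~\ref{Prop:Identity}. Your reduction from a common invariant block to finiteness is fine, and it is exactly how the paper argues. The trouble is that the statement itself is false, already in $V$.

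Here is a counterexample; write all products as compositions of maps, so the multiplication convention plays no role. Let $P_1,P_2,P_3$ be the dyadic subblocks of $\mathcal{C}^n$ given by $[0,\tfrac12]$, $[\tfrac12,\tfrac34]$, $[\tfrac34,1]$ in the first coordinate. Let $c$ be represented by $(\{P_1,P_2,P_3\},\{P_1,P_2,P_3\},(1\,2\,3))$, so $c^3=1$. Take any $f_1\in nV$ of infinite order supported in $P_1$ (e.g.\ a rescaled copy of a nontrivial element of Thompson's group $F$ acting on the first coordinate). Put $f_2=cf_1c^{-1}$ and $f_3=cf_2c^{-1}$; the $f_i$ have disjoint supports, commute, and satisfy $cf_3c^{-1}=f_1$. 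Let $u=f_1f_2^{-1}$, $g=c$ and $h=uc$. Then
\[
\begin{aligned}
h^3&=u\,(cuc^{-1})\,(c^2uc^{-2})\,c^3=(f_1f_2^{-1})(f_2f_3^{-1})(f_3f_1^{-1})=1,\\
(hc)^3&=(uc^2)^3=u\,(c^2uc^{-2})\,(cuc^{-1})=(f_1f_2^{-1})(f_3f_1^{-1})(f_2f_3^{-1})=1,
\end{aligned}
\]
and $ch=c\,(hc)\,c^{-1}$. So $g$, $h$ and the product in either order are all torsion. Yet $hg^{-1}=u$ has infinite order, so $\langle g,h\rangle$ is infinite; it is the wallpaper group $\mathbb{Z}^2\rtimes\mathbb{Z}/3$, the $(3,3,3)$ triangle group. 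In particular no dyadic block is fixed by both $g$ and $h$: it would be fixed by $u$, and then Lemma~\ref{Lem:Power} would make $u$ torsion.

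The paper deduces the corollary from Proposition~\ref{Prop:Identity} just as you do, so the defect lies in that proposition's supporting chain. It surfaces in Lemma~\ref{Lem:Flip}. In the paper's convention $(gh)(X)=h(g(X))$, the proof of that lemma only establishes $(hg)(R^p)=R^p$, i.e.\ $h(R^p)=g^{-1}(R^p)$. The asserted identity $g(R^p)=h^{-1}(R^p)$ is instead equivalent to $(gh)(R^p)=R^p$.

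In the example, $R^p$ cannot be invariant under both elements. As compositions, $gh=uc^2$ and $hg=cuc$, and they generate a group containing $(cuc)(uc^2)^{-1}=f_1^{-1}f_2^{2}f_3^{-1}$, which has infinite order. What your argument does correctly establish is that a subgroup leaving some dyadic block $B$ invariant embeds in $\mathfrak{S}_{|B|}$. Producing such a block needs substantially more than the torsion of $g$, $h$ and $gh$.
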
  
\begin{proof}  
	Let  $p$  be the order of  $gh$.  
	Then,  
	$R^p$  is invariant under any word in  $g$  and  $h$  by Proposition \ref{Prop:Identity}, 
	and hence, 
	$\langle g, \, h \rangle \leq \mathfrak{S}_{|R^{p}|}$  and  
	$|\langle g, \, h \rangle| \leq |\mathfrak{S}_{|R^{p}|}| = |R^p|!$   
\end{proof} 

We want to generalize Corollary \ref{Cor:TwoGenerator} to 
a general case where we consider a finite generated group with any number of generators.
To see this, 
recall the definition of torsion locally finiteness.  

\begin{definition}[Torsion local finiteness]
         A group $G$ is said to be torsion locally finite 
         if all its finitely generated torsion subgroups are finite. 
\end{definition}

\begin{theorem}[Restatement of Theorem \ref{Thm:Main1} ]\label{Thm:TorsionLocallyFinite}  
 	$nV$  is torsion locally finite for  $n \geq 1$.  
\end{theorem}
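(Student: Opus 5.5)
We argue by induction on the number $k$ of generators, using Corollary \ref{Cor:TwoGenerator} as the engine. The key reformulation is that a subgroup $G<nV$ is finite as soon as there is a dyadic block $B$ that is admissible for every element of $G$ and satisfies $g(B)=B$ for all $g\in G$. Indeed, each $g\in G$ is then represented by $(B,B,\sigma_g)$, and $g\mapsto\sigma_g$ is injective, since the triple determines $g$. It is also a homomorphism, up to the anti-isomorphism convention. Hence $G$ embeds in $\mathfrak{S}_{|B|}$. So it suffices to produce a common invariant dyadic block for the finitely many generators of a torsion subgroup $G=\langle g_1,\dots,g_k\rangle$. A block invariant under all the generators is invariant under all words in them, exactly as in the proof of Corollary \ref{Cor:TwoGenerator}.

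For the induction, suppose every torsion subgroup of $nV$ generated by at most $k-1$ elements is finite, and let $G=\langle g_1,\dots,g_k\rangle$ be torsion. Then $H=\langle g_1,\dots,g_{k-1}\rangle$ is a finite group. We want a block $X$ with $f(X)=X$ for all $f\in H$. To build it, take any block $X_0$ admissible for every element of $H$; this is possible by taking common refinements of admissible blocks, using Lemma \ref{Lem:Admissibility}(2). Then set
\begin{equation*}
X=\bigwedge_{f\in H} f(X_0).
\end{equation*}
This is the finite analogue of the construction in Proposition \ref{Prop:IdenticalBrock}. Applying Lemma \ref{Lem:Admissibility}(3) repeatedly, together with Proposition \ref{Prop:CompositionDB} to justify $f'(f(X_0))=(ff')(X_0)$, we get that each $f'\in H$ permutes the factors of this common refinement. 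Hence $f'(X)=X$.

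Next, by Proposition \ref{Prop:IdenticalBrock} choose $Y$ with $g_k(Y)=Y$. Now run the machinery of Section \ref{Sec:Torsion} with $g$ replaced by the whole group $H$ and $h$ replaced by $g_k$. The only inputs the proofs of Lemmas \ref{Lem:Filtration}--\ref{Lem:TandR} and Propositions \ref{Prop:TandR}, \ref{Prop:Identity} use are the following:
\begin{enumerate}
\item blocks $X$ and $Y$ fixed by $g$ and $h$ respectively;
\item the fact that products such as $gh$ are torsion.
\end{enumerate}
For each $f\in H$, the product $fg_k\in G$ is torsion. Proposition \ref{Prop:Identity} applied to the pair $(f,g_k)$, using the blocks $X$ and $Y$, then gives a block $R_f$ that is invariant under both $f$ and $g_k$. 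Here $p$ may be taken as a common multiple $N$ of the orders of all $fg_k$, by the remark following Proposition \ref{Prop:Identity}; this $N$ exists because $H$ is finite and $G$ is torsion.

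The main obstacle is combining these blocks: $R_f$ is invariant under $f$ and $g_k$, but not necessarily under the rest of $H$. My first attempt is to take
\begin{equation*}
B=\bigwedge_{f\in H}R_f,
\end{equation*}
which is invariant under $g_k$ by Lemma \ref{Lem:Admissibility}(3). I would then check whether $R_f$ is in fact independent of $f$, or at least $H$-invariant. The reason to hope so is the formula $R^p=R^{p-1}\wedge D^1$ from the proof of Proposition \ref{Prop:Identity}, which expresses $R^p$ as an iterated common refinement of images of $X\wedge Y$ under words in $f$ and $g_k$.

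If that check fails, I would instead reduce to two generators differently. Since $H$ is finite, $G$ is generated by $H$ together with the conjugates $f g_k f^{-1}$, and I would look for a single torsion element to pair with $g_k$. A cleaner alternative is to apply Corollary \ref{Cor:TwoGenerator} inductively to subgroups of the form $\langle F, g_k\rangle$ with $F$ finite. For this I would prove a version of Corollary \ref{Cor:TwoGenerator} in which the element $g$ is replaced by a finite group. The proofs there only use that $g$ fixes $X$ and that the relevant products are torsion, so I expect this generalization to go through verbatim, with $R^N$ built from all words in $H\cup\{g_k\}$ of bounded length.
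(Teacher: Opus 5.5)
Your preliminary steps are fine. A block fixed by every generator embeds the subgroup in $\mathfrak{S}_{|B|}$, and $X=\bigwedge_{f\in H}f(X_0)$ is an $H$-invariant block, a step the paper's proof takes for granted. But the proposal stops exactly at the step that carries the induction: producing one block fixed by all of $H$ and by $g_k$ at once. Your blocks $R_f$ are built from the powers of different elements $g_kf$, starting from different pairs $(g_k^{-1}(X\wedge Y),f(X\wedge Y))$, so nothing makes them agree. Nor is $\bigwedge_fR_f$ forced to be $H$-invariant. It is admissible for each $f'\in H$, since it refines $R_{f'}$, but $f'$ need not act on the other $R_f$. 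The ``verbatim'' extension of Corollary \ref{Cor:TwoGenerator} to a finite group in place of $g$ is not available either, because Lemmas \ref{Lem:Filtration}--\ref{Lem:TandR} concern the powers of the single element $gh$.

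The paper's missing idea is to make one block pair arise from many factorizations. Keep $Y$ fixed by all of $H=\langle h_1,\dots,h_s\rangle$, and let $X_i$ be fixed by $g,h_1,\dots,h_i$. For every $0\le j\le i$, the splitting $(gh_1\cdots h_j)\cdot(h_{j+1}\cdots h_{i+1})$ has $X_i$ fixed by its left factor and $Y$ fixed by its right one. Each splitting represents $gh_1\cdots h_{i+1}$ by the same pair $(g^{-1}(X_i\wedge Y),h_{i+1}(X_i\wedge Y))$, because $h_1,\dots,h_i$ fix $X_i\wedge Y$. The block supplied by Proposition \ref{Prop:Identity} depends only on this pair and the order. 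So it is fixed by every factor, hence by $g,h_1,\dots,h_{i+1}$.

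Be aware, though, that this engine is unsound with only the inputs you list, and that affects the paper's argument as much as yours. Proposition \ref{Prop:Identity} assumes only that $g$, $h$ and $gh$ are torsion, and in that generality Corollary \ref{Cor:TwoGenerator} already fails in $V\le nV$. Here is a counterexample:
\begin{itemize}
\item Let $r$ permute the cones $C_1,\dots,C_4$ on $00,01,10,11$ cyclically, so $r^4=1$.
\item Let $a$ be an infinite-order element supported in $C_1$, and put $a_i=r^{i-1}ar^{1-i}$, $x=a_1a_3^{-1}$, $y=a_2a_4^{-1}$.
\item Then $rxr^{-1}=y$ and $ryr^{-1}=x^{-1}$, so $\langle x,y,r\rangle\cong\mathbb{Z}^2\rtimes\mathbb{Z}/4$.
\item The element $s=xr^2$ satisfies $s^2=(rs)^4=1$, yet $\langle r,s\rangle$ contains $sr^2=x$, of infinite order.
\end{itemize}
In the paper, the inductive step of Lemma \ref{Lem:TandR} writes $g(h(R^{i-1}))$ and $(hg)(R^{i-1})$, although $R^{i-1}$ is not known to be admissible for $h$ or $hg$. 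So invariant blocks plus torsion of $gh$ cannot by themselves yield a common invariant block. A correct argument, whether for your $R_f$ or for the paper's induction step, has to use that further elements of the subgroup are torsion.
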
 
\begin{proof} 
	We want to prove this by induction on the number of 
	generators of a torsion subgroup in  $nV$.  
	Thus, 
	assume that 
	a torsion subgroup  $H= \langle h_1,h_2, \cdots, h_s \rangle < nV$  is finite,    
	and choose a torsion element  $g \not\in H$.  
	We want to show that if  $\langle g, H \rangle$  is torsion, 
	then it is finite.  
	Corollary \ref{Cor:TwoGenerator} establishes the claim for the case of  $s = 1$.  

	Since $\langle g, H \rangle$ is torsion, 
        $gh_1$ is of finite order, 
        and by Proposition \ref{Prop:Identity}, 
        there exists a dyadic block  $X_1$ 
        such that 
        \begin{equation*}
      		g(X_1) = h_1(X_1) = X_1.
        \end{equation*} 
        The hypothesis states   
        that there is a dyadic block  $Y$  which is invariant 
        under the action of any element of  $H$, i.e.,   
        \begin{equation*} 
		Y = h_1(Y) = h_2(Y) = \cdots = h_s(Y).   
	\end{equation*} 
 
 	Consider the torsion element $gh_1h_2 \in \langle g, H \rangle$.  
	Notice that both  $X_1$  and  $Y$  are invariant under the action of  $h_1$.     
	The product  $g(h_1h_2)$  of  $g$  and  $h_1 h_2$    
	is represented as a multiplication of dyadic blocks by 
        \begin{align*} 
	          (g^{-1}(X_1), X_1) \cdot (Y, (h_1h_2)(Y)) 
	                       &  = (g^{-1}(X_1 \wedge Y), (h_1h_2)(X_1 \wedge Y)) \\
	                       &  = (g^{-1}(X_1 \wedge Y), h_2(h_1(X_1 \wedge Y))) \\
	                       & = (g^{-1}(X_1 \wedge Y), h_2(X_1 \wedge Y)).        
        \end{align*}  
        By the proof of Proposition \ref{Prop:Identity}, 
        we have a dyadic block $X_2$  determined by  
        $(g^{-1}(X_1 \wedge Y), h_2(X_1 \wedge Y))$  and the order of  
        $gh_1h_2$  
        so that  we have
        \begin{equation}\label{Eq:g(X_2)=X_2} 
		g(X_2) = X_2 = (h_1h_2)(X_2).   
	\end{equation}  
        
	On the other hand, 
	regarding  $g h_1 h_2$  as a product of  $gh_1$  and  $h_2$  
	and we represent it as a multiplication of block pairs,
	\begin{align*} 
		((gh_1)^{-1}(X_1), X_1) \cdot (Y, h_2(Y)) 
		&  = ((gh_1)^{-1}(X_1 \wedge Y), h_2(X_1 \wedge Y)) \\
		&  = (g^{-1}(h_1^{-1}(X_1 \wedge Y)), h_2(X_1 \wedge Y)) \\
		& = (g^{-1}(X_1 \wedge Y), h_2(X_1 \wedge Y)).  
        \end{align*} 
        This is exactly the same dyadic block pair that represents  $g(h_1h_2)$  and 
        since the order of  $(gh_1)h_2$  is equal to that of  $g(h_1h_2)$, 
        the dyadic block obtained by the method of the proof of Proposition \ref{Prop:Identity}  
        is the same one, $X_2$,  and  
	\begin{equation}\label{Eq:gh1(X_2)=X_2} 
		(gh_1)(X_2) = X_2 = h_2(X_2).  
	\end{equation} 
	Then from (\ref{Eq:g(X_2)=X_2}) and (\ref{Eq:gh1(X_2)=X_2}), 
	we have 
	\begin{equation*} 
		h_1(X_2) = h_1(g(X_2)) = (gh_1)(X_2) = X_2.  
         \end{equation*} 
         In particular, 
         $X_2$  is invariant under the action of  $g, h_1$  and  $h_2$.  

	We now apply the same argument to the products  $(gh_1h_2)h_3$  and  
	$(gh_1)(h_2 h_3)$, 
	and obtain a dyadic block  $X_3$  which is invariant under 
	the action of  $g, h_1, h_2$  and $h_3$.  
	Repeating the argument to the product  $(gh_1 \cdots h_i)(h_{i+1} \cdots h_s)$  
	and we finally obtain that  
	there is a dyadic block  $X_s$  such that $X_s$ is invariant under 
	the action of  $g, h_1, h_2, \cdots, h_s$,  
	and in particular,  $\langle g, H \rangle$  is finite. 
\end{proof}  

\begin{remark}  
	Theorem \ref{Thm:TorsionLocallyFinite} was proven for the $n = 1$ case by  \cite{BCST}, 
	and our proof provides an alternative one in this case.  
\end{remark} 
 
%
%
\medskip 
\section{Embedding $\mathbb{Q}$ into $2V$}
\label{Sec:Roots} 

We now turn to study the elements of infinite order in  $nV$  
and prove Theorem \ref{Thm:Main2}  in the introduction.
We first recall the definition of roots,   

\begin{definition}
        For a group $G,$ 
        an element $h \in G$ is said to be a root of $g \in G$ 
        if there is  $k \geq 2$  such that $h^k = g$,  
        and the smallest such $k$  is called the order of the root $h$. 
\end{definition} 


It is known by G. Higman  \cite{Higman}  that, 
Thompson group  $V$  does not have an element of infinite order 
with roots of arbitrary large order, 
thus fails to contain  $\mathbb{Q}$.
On the other hand, 
the following example motivated us to show that 
it is not the case for  $n \geq 2$. 

\begin{example}\label{Ex:Root} 
	We start with an infinite order element  $h$  
	as depicted on the 
	right
	of Figure \ref{Fig:Root}
	which is represented by  $(S, T)$ with a labeling in each subblock.  
	The induced vertical permutation by  $h$  is the identity, 
	namely the subblock labelled by  
	$*$
	in  $S$  is mapped to the subblock 
	$*$
	in   $T$.     
	A root  $h_1$  of  $h$  of order  $2$  is 
        depicted on the 
        right
        of Figure \ref{Fig:Root} as  $(S_1, T_1)$.   
	\begin{figure}[htp]
		\begin{center} 	      
		\begin{tikzpicture}[baseline=-0.65ex, thick, scale=0.3]
                 \draw (0, -5) to (0,5);
                 \draw (0, -5) to (10, -5);
                 \draw (10, -5) to (10,5);
                 \draw (10, 5) to (0, 5);
                 \draw (5, -5) to (5,5);
                 \draw (5, -2.5) to (10,-2.5);
                 \draw (5, 0) to (10,0);
                 \draw (2.5, 0) node{1};
                 \draw (7.5, 2.5) node{a};
                 \draw (7.5, -1.25) node{b};
                 \draw (7.5, -3.75) node{c};
         \end{tikzpicture}
         \begin{tikzpicture}[baseline=-0.65ex, thick, scale=0.3]
                 \draw (5, 0) node{$\mapsto$};
                 \draw (5, -7) node{$h_1$};
                 \end{tikzpicture}
         \begin{tikzpicture}[baseline=-0.65ex, thick, scale=0.3]
                 \draw (0, -5) to (0,5);
                 \draw (0, -5) to (10, -5);
                 \draw (10, -5) to (10,5);
                 \draw (10, 5) to (0, 5);
                 \draw (5, -5) to (5,5);
                 \draw (5, -5) to (5,5);
                 \draw (0, 2.5) to (5,2.5);
                 \draw (0, 0) to (5,0);
                 \draw (2.5, 3.75) node{a};
                 \draw (2.5, 1.25) node{b};
                 \draw (2.5, -2.5) node{c};
                 \draw (7.5, 0) node{1};
        \end{tikzpicture}
                 \quad
        \begin{tikzpicture}[baseline=-0.65ex, thick, scale=0.3]
                 \draw (0, -5) to (0,5);
                 \draw (0, -5) to (10, -5);
                 \draw (10, -5) to (10,5);
                 \draw (10, 5) to (0, 5);
                 \draw (0, 0) to (10,0);
                 \draw (0,-2.5) to (10, -2.5);
                 \draw (5, 2.5) node{a};
                 \draw (5, -1.25) node{b};
                 \draw (5, -3.75) node{c};
        \end{tikzpicture}
        \begin{tikzpicture}[baseline=-0.65ex, thick, scale=0.3]
                 \draw (5, -7) node{$h$};
                 \draw (5, 0) node{$\mapsto$};
                 \end{tikzpicture}	
        \begin{tikzpicture}[baseline=-0.65ex, thick, scale=0.3]
                 \draw (0, -5) to (0,5);
                 \draw (0, -5) to (10, -5);
                 \draw (10, -5) to (10,5);
                 \draw (10, 5) to (0, 5);
                 \draw (0, 0) to (10,0);
                 \draw (0,2.5) to (10, 2.5);
                 \draw (5, 3.75) node{a};
                 \draw (5, 1.25) node{b};
                 \draw (5, -2.5) node{c};               
        \end{tikzpicture}
                 	\caption{$h$  and its root  $h_1$  of order  $2$}
			\label{Fig:Root}
		\end{center} 	
	\end{figure} 
\end{example} 
        
Divide the dyadic block with a single member vertically into two subblocks
and squeeze $S$ to the right half in order to obtain  $S_1$  and similarly, 
we divide the single dyadic block vertically into two subblocks 
and squeeze $T$ to the left half to obtain  $T_1$.  
Giving labelings by combining that of  $h$  and 
a rotation of  order  $2$  in the horizontal direction, 
we obtain the resulted block pair  $(S_1, T_1)$.

We now define the following sequence of dyadic blocks 
in order to construct a sequence of roots that we need later on.

\begin{definition}[$L_k$]
        We take a dyadic block with a single member 
        and divide it into two vertical parts, 
        then we obtain $L_2$.
        Then we divid the left half of $L_2$ again vertically into half and we obtain $L_3$.
        By repeating this process we obtain a dyadic block with $k$ vertical subblocks 
        and denoted it by $L_k$ (see Figure \ref{Fig:block}).
\begin{figure}[htp]
           \begin{center} 
             \begin{tikzpicture}[baseline=-0.65ex, thick, scale=0.3]
                 \draw (0, -5) to (0,5);
                 \draw (0, -5) to (10, -5);
                 \draw (10, -5) to (10,5);
                 \draw (10, 5) to (0, 5);               
                 \draw (5, -5) to (5,5);     
                                  \draw (5, -7) node{$L_2$};                               
                 \end{tikzpicture}\quad
                 \begin{tikzpicture}[baseline=-0.65ex, thick, scale=0.3]
                 \draw (0, -5) to (0,5);
                 \draw (0, -5) to (10, -5);
                 \draw (10, -5) to (10,5);
                 \draw (10, 5) to (0, 5);               
                 \draw (5, -5) to (5,5);
                 \draw (2.5, -5) to (2.5,5);         
                 \draw (5, -7) node{$L_3$};                                     
                 \end{tikzpicture}\quad	
                 \begin{tikzpicture}[baseline=-0.65ex, thick, scale=0.3]
                 \draw (0, -5) to (0,5);
                 \draw (0, -5) to (10, -5);
                 \draw (10, -5) to (10,5);
                 \draw (10, 5) to (0, 5);               
                 \draw (5, -5) to (5,5);
                \draw (1.25, -5) to (1.25,5);
                 \draw (2.5, -5) to (2.5,5);            
                 \draw (5, -7) node{$L_4$};                                            
                 \end{tikzpicture}\quad
                 \begin{tikzpicture}[baseline=-0.65ex, thick, scale=0.3]
                 \draw (0, -5) to (0,5);
                 \draw (0, -5) to (10, -5);
                 \draw (10, -5) to (10,5);
                 \draw (10, 5) to (0, 5);               
                 \draw (5, -5) to (5,5);
                 \draw (0.625, -5) to (0.625,5); 
                \draw (1.25, -5) to (1.25,5);
                 \draw (2.5, -5) to (2.5,5);            
                 \draw (5, -7) node{$L_5$};                                            
                 \end{tikzpicture}
                   \begin{tikzpicture}[baseline=-0.65ex, thick, scale=0.3]
                 \draw (5, 0) node{$\cdots$};  
                 \end{tikzpicture}
         	\caption{Dyadic block $L_2$, $L_3$, $L_4$, $L_5$ }
			\label{Fig:block}
		\end{center} 	
\end{figure} 
\end{definition}

\begin{proposition}\label{Prop:QEmbeds2V}
       The group $nV$  contains a subgroup isomorphic to  $\mathbb{Q}$ for $n \geq 2 $.
\end{proposition}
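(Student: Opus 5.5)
The plan is to build an explicit sequence $h = h_0, h_1, h_2, \ldots$ in $2V$, acting only on the first two coordinates, with $h_{k}^{k+1} = h_{k-1}$. Then $\langle h_0, h_1, h_2, \ldots\rangle$ is an increasing union of infinite cyclic groups $\langle h_k\rangle$, with index $k+1$ at each step. Since every positive integer divides $(k+1)!$ for some $k$, this union is isomorphic to $\mathbb{Q}$, via $h_k \mapsto 1/(k+1)!$. For $n>2$ we take the elements to act as the identity on the remaining coordinates; this embeds $2V$ in $nV$, so it suffices to treat $n=2$.

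The base element $h$ is the infinite-order element of Example \ref{Ex:Root}. It acts only in the horizontal direction and is independent of the vertical coordinate. It is essentially the standard ``shift'' element of $F \le V$ acting on the horizontal factor, so it has infinite order: it has a fixed endpoint with a nontrivial slope there.

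To pass from $h_{k-1}$ to the root $h_k$ of order $k+1$, I generalize the construction after Example \ref{Ex:Root} using the blocks $L_{k+1}$. The $k+1$ vertical strips of $L_{k+1}$ are each canonically affine to $\mathcal{C}^2$, with a vertical rescale only and the horizontal coordinate untouched. We define $h_k$ to cyclically permute these strips: strip $j$ is sent affinely to strip $j+1$ for $j=1,\ldots,k$. Strip $k+1$ is sent back to strip $1$ after first applying (a rescaled copy of) $h_{k-1}$. Concretely, let $\varphi_j:\mathcal{C}^2\to$ (strip $j$) be the canonical rescaling maps. Set $h_k = \varphi_{j+1}\varphi_j^{-1}$ on strip $j$ for $j\le k$, and $h_k=\varphi_1 h_{k-1}\varphi_{k+1}^{-1}$ on the last strip. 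Then $h_k^{k+1}$ restricted to strip $j$ equals $\varphi_j h_{k-1}\varphi_j^{-1}$, the conjugate of $h_{k-1}$ into that strip. The identity $h_k^{k+1}=h_{k-1}$ therefore does not hold literally. The fix is to replace $h_{k-1}$ by the element $\tilde h_{k-1}$ acting as $\varphi_j h_{k-1}\varphi_j^{-1}$ on every strip simultaneously. This requires choosing the construction so that $\tilde h_{k-1}=h_{k-1}$ itself.

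This is why the base element should be vertically invariant and the strips should be vertical. More precisely, I would arrange that each $h_k$ acts as a product $\mathrm{id}\times f_k$, or commutes with the vertical strip decomposition, so that conjugating by $\varphi_j$ gives back the same map. Since $\varphi_j$ is a product of a vertical-direction rescale and the identity, any $h_{k-1}$ of the form $a\times \mathrm{id}$ in the right coordinates is preserved. I would set up an inductive invariant: $h_{k-1}$ commutes with every element that acts on one coordinate alone, in the coordinate used by the $L$-strips. I expect this to be the main obstacle. The construction for $h_k$ mixes the strip coordinate, so $h_k$ is no longer of product form. The remedy I would try first is to alternate coordinates: build $h_k$ using strips in the coordinate in which $h_{k-1}$ is trivial. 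Equivalently, write everything on the subshift model as $h_k = \sigma_k \cdot(\text{shift in the other coordinate})$ and check directly on prefixes that the $(k+1)$-st power is $h_{k-1}$. This is a word computation on dyadic prefixes, using the triple description from Section \ref{Sec:Preliminaries}.

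Finally, $h_k\in 2V$ because each piece is an affine map of dyadic subblocks. It has infinite order because a power of it equals a power of $h$. The subgroup generated is torsion-free abelian, being a directed union of infinite cyclic groups each contained in the next, and it is divisible by construction. Hence it is isomorphic to $\mathbb{Q}$.
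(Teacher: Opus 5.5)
Your proposal has a genuine gap in the inductive step, so the chain $\langle h_k\rangle$ is never actually built.

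\textbf{Why your construction fails.} You let $h_k$ cycle the $k+1$ strips of $L_{k+1}$ and insert a rescaled copy of $h_{k-1}$ on the last strip. You correctly compute that $h_k^{k+1}$ is then the element $\tilde h_{k-1}$, acting as $\varphi_j h_{k-1}\varphi_j^{-1}$ inside \emph{every} strip. But $\tilde h_{k-1}=h_{k-1}$ requires $h_{k-1}$ to preserve each strip and commute with the strip rescalings, and this holds only for $k=1$. Already $h_1$ exchanges the two halves of $L_2$. So $\tilde h_1$ preserves each of the three strips of $L_3$ while $h_1$ does not. Hence $h_2^3\neq h_1$, and the groups $\langle h_k\rangle$ are not nested.

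Your proposed invariant is already false for $h_1$. Alternating coordinates cannot rescue it: $h_1$ is nontrivial in \emph{both} coordinates, since it permutes strips in one and applies $h$ in the other. So no coordinate remains in which $h_1$ is trivial, and in $nV$ alternation would give only finitely many steps anyway.

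Your closing suggestion $h_k=\sigma_k\cdot(\text{shift})$ has the right shape. But it leaves unspecified exactly what the proof must supply: what $\sigma_k$ is, why $\sigma_k^{k+1}=\sigma_{k-1}$, and on which strip the shift goes so that the powers of $h$ match.

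\textbf{The missing idea.} The paper's passage from $L_2$ to $L_{2,3}$ encodes it: \emph{refine} rather than \emph{nest}. Maintain the invariant that $h_{k-1}$ cyclically permutes $k!$ strips $Q_1\to Q_2\to\cdots\to Q_{k!}\to Q_1$ by affine maps in the strip coordinate. It applies $h$ only on the step leaving one strip $Q_{j_0}$; since $h$ lives in the other coordinate, it commutes with all these strip maps.

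Subdivide every $Q_j$ by the same pattern into $Q_{j,1},\dots,Q_{j,k+1}$. Let $h_k$ send $Q_{j,m}\to Q_{j,m+1}$ for $m\le k$ and $Q_{j,k+1}\to Q_{j+1,1}$, applying $h$ only on the step leaving $Q_{j_0,k+1}$. Then $h_k$ is a single $(k+1)!$-cycle of strips.

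The power $h_k^{k+1}$ carries $Q_{j,m}$ affinely onto $Q_{j+1,m}$. This is the restriction of the map $Q_j\to Q_{j+1}$, because the subdivisions agree. Along the way it crosses the step $Q_{j,k+1}\to Q_{j+1,1}$ exactly once, so it picks up one factor of $h$ exactly when $j=j_0$. Thus $h_k^{k+1}=h_{k-1}$.

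The paper's $s_3$ is the case $k=2$: a $6$-cycle on the strips of $L_{2,3}$ with $h$ applied on one strip, whose cube is $s_2$. With this step in place, your final paragraph goes through: an increasing union of infinite cyclic groups with $h_k^{(k+1)!}=h$ is isomorphic to $\mathbb{Q}$.
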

\begin{proof} 
         Since there is an obvious sequence of isomorphic embeddings of 
         Brin-Thompson groups  $2V < 3V < \cdots$, 
         we only need to prove Theorem  \ref{Thm:Main2}  for  $n = 2$. 
 	Next, we construct a sequence of groups in $2V$
	\begin{equation}\label{Eq:seq} 
		\langle s_1 \rangle  <  \langle s_2 \rangle  < \cdots <  \langle s_k \rangle 
	\end{equation}
	generated by roots of  $h$  of order $k!$.
	Here we take $s_1 = h$ and $s_2 = h_1$ 
	as constructed in Figure \ref{Fig:Root}.
	We then construct $s_3$ as follows.   
	Start with a block in the form of  $L_2$, 
	then we embed  $L_3$  into each subblock of $L_2$  with horizontal scaling 
	and we denote it by $L_{2,3}$.  
	The product of the numbers in the tuples in the suffix 
	indicates the number of vertical blocks  $3\cdot 2 = 3!$.  
	Then, 
	we let  $s_3$  be generate by a horizontal slide with  
	a ``vertical shifting" at the central vertical line
	as depicted in Figure \ref{Fig:jthRoot}.
 	\begin{figure}[htp]
		\begin{center}
          \begin{tikzpicture}[baseline=-0.65ex, thick, scale=0.3]	
		  \draw (0, -5) to (0,5);
                 \draw (0, -5) to (10, -5);
                 \draw (10, -5) to (10,5);
                 \draw (10, 5) to (0, 5);               
                 \draw (5, -5) to (5,5);
                 \draw (1.25, -5) to (1.25,5); 
                 \draw (2.5, -5) to (2.5,5); 
                 \draw (5, -2.5) to (6.125,-2.5);
                 \draw (5, 0) to (6.125,0);             
                 \draw (6.125, -5) to (6.125,5);    
                 \draw (7.5, -5) to (7.5,5);        
                   
                 \draw (5.65, 2.5) node{a};
                 \draw (5.65, -1.25) node{b};
                 \draw (5.65, -3.75) node{c};   
                 
                 \draw (0.75, 0) node{3};
                 \draw (1.9, 0) node{4};
                 \draw (3.75, 0) node{5};   
                 \draw (6.9, 0) node{1};   
                 \draw (8.75, 0) node{2};   
           \end{tikzpicture}
           \begin{tikzpicture}[baseline=-0.65ex, thick, scale=0.3]
                 \draw (5, -7) node{$s_{3}$};
                 \draw (5, 0) node{$\mapsto$};
                 \end{tikzpicture}	
                 \begin{tikzpicture}[baseline=-0.65ex, thick, scale=0.3]
               \draw (0, -5) to (0,5);
                 \draw (0, -5) to (10, -5);
                 \draw (10, -5) to (10,5);
                 \draw (10, 5) to (0, 5);               
                 \draw (5, -5) to (5,5);
                 \draw (1.25, -5) to (1.25,5); 
                 \draw (2.5, -5) to (2.5,5);          
                 \draw (6.125, -5) to (6.125,5);    
                 \draw (7.5, -5) to (7.5,5);      
               
                 \draw (2.5, 0) to (5,0);
                 \draw (2.5,2.5) to (5, 2.5);
                 \draw (3.75, 3.75) node{a};
                 \draw (3.75, 1.25) node{b};
                 \draw (3.75, -2.5) node{c};      
                  
                  \draw (0.75, 0) node{4};
                 \draw (1.9, 0) node{5};
                 \draw (5.75, 0) node{1};   
                 \draw (6.9, 0) node{2};   
                 \draw (8.75, 0) node{3};        
                 \end{tikzpicture}
			\caption{The root  $s_3$  of  $s_2$  of order  $3$}
			\label{Fig:jthRoot}
		\end{center} 	
    	\end{figure} 
	
	Finally,  
	by repeating the construction of the block pairs $L_{1,2, \cdots, k}$,  
	we obtain a sequence of elements  $s_k \in 2V$ 
	so that each generates a finite cyclic group 
	which contains the ones generated by the previous elements in the sequence
	and in particular  $\langle s_k \rangle$  can be embedded in  $\mathbb{Q}$.
	Thus, 
	we have the following inductive limit 
	\begin{equation*} 
		\bigcup_{k = 1}^{\infty} \, \langle s_k \rangle 
	\end{equation*}
 	that contains a root of  $h$  of arbitrary order and hence    
	is isomorphic to  $\mathbb{Q}$.  
\end{proof}

\begin{theorem}[Restatement of Theorem \ref{Thm:Main2}]\label{Thm:EmbeddingQ} 
	If  $n \geq 2$,  
	then the Brin-Thompson group  $nV$  contains continuum many copies of
	the additive group of the rationals  $\mathbb{Q}$  sharing 
	the subgroup isomorphic to  $\mathbb{Z}$.  
\end{theorem}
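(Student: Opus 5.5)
The plan is to upgrade the single chain of roots in Proposition~\ref{Prop:QEmbeds2V} to a branching family of chains, indexed by infinite sequences of binary choices. As there, the embeddings $2V < 3V < \cdots$ reduce everything to $n = 2$. We fix the infinite order element $h = s_1$ of Example~\ref{Ex:Root}, so that $\langle h\rangle \cong \mathbb{Z}$ is the shared subgroup. Then, for each sequence $\epsilon = (\epsilon_2, \epsilon_3, \dots) \in \{0,1\}^{\mathbb{N}}$, we build a compatible sequence of roots
\begin{equation*}
s_1^{\epsilon} = h, \qquad (s_k^{\epsilon})^{k} = s_{k-1}^{\epsilon}, \qquad s_k^\epsilon \text{ depending only on } \epsilon_2,\dots,\epsilon_k .
\end{equation*}
Each union $Q_\epsilon = \bigcup_k \langle s_k^\epsilon\rangle$ is then isomorphic to $\mathbb{Q}$ by exactly the argument closing Proposition~\ref{Prop:QEmbeds2V}. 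It is torsion free because $h$ has infinite order, and $h$ has roots of every order $k!$ inside it.

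To produce the binary choice at stage $k$, I would use the freedom in the construction of $s_k$ from $s_{k-1}$ via the blocks $L_{2,3,\dots,k}$. The root $s_k$ is a horizontal cyclic slide of $k$ vertical strips, with the ``vertical shifting'' of $s_{k-1}$ inserted at one step. There are at least two inequivalent ways to do this. The simplest is to compose the slide with a finite order element $t$ commuting with the slide and with the relevant structure, for instance a permutation of the $k$ strips by a rotation $r$ of order $k$ with $\gcd$ considerations arranged so that $(s_k r)^k = s_k^k$. More concretely, I would take two roots $s_k$ and $s_k'$ of $s_{k-1}$ that differ by where, among the $k$ strips, the shifting block is placed. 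Both are $k$-th roots of $s_{k-1}$, and they do not commute, or at least do not generate a cyclic group. The recursion continues independently on each branch because every such root again has the form ``cyclic slide on $L_{2,\dots,k}$ with one embedded copy of $h$'', so the construction of Figure~\ref{Fig:jthRoot} applies verbatim to either.

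The main obstacle is showing that different $\epsilon$ give \emph{different} subgroups, which yields continuum many. The count alone is immediate, since there are $2^{\aleph_0}$ sequences and $nV$ is countable, so it has at most continuum subgroups. If $\epsilon \ne \epsilon'$ first differ at stage $k$, then $s_k^\epsilon$ and $s_k^{\epsilon'}$ are both $k$-th roots of $s_{k-1}$. If both lay in one copy of $\mathbb{Q}$, uniqueness of roots in $\mathbb{Q}$, which is torsion free with unique division, would force $s_k^\epsilon = s_k^{\epsilon'}$. So it suffices to check that the two stage-$k$ choices are genuinely distinct elements of $2V$. This is a direct check on the block pairs: the images of a specific subblock, namely the strip carrying the shifting copy of $h$, differ. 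Hence $Q_\epsilon \ne Q_{\epsilon'}$, all of them contain $\langle h\rangle$, and we obtain continuum many copies of $\mathbb{Q}$ sharing this $\mathbb{Z}$.

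The delicate step I would verify carefully is that each alternative root really satisfies $(s_k')^k = s_{k-1}$ in $2V$. I would check this by composing block pairs as in Figure~\ref{Fig:jthRoot}, tracking that after $k$ slides every strip returns home and the shifting has been applied exactly once in total on the relevant strip.
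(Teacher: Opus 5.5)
Your proposal is correct and follows the paper's route: a binary tree of nested root towers $s_k^k = s_{k-1}$ above the fixed $h$, with one choice per stage, giving $2^{\aleph_0}$ copies of $\mathbb{Q}$ that all contain $\langle h\rangle$. The paper's binary choice is subdividing the strips by $L_k$ versus $R_k$ rather than moving the twisted step; you should drop your side suggestion of composing with a rotation of the strips, since such a rotation conjugates the twist onto a different step, so it does not commute with $s_k$ and $(s_k r)^k \neq s_k^k$. Your observation that two distinct $k$-th roots of the same $s_{k-1}$ cannot lie in a common copy of $\mathbb{Q}$, by uniqueness of division, cleanly supplies the distinctness that the paper only asserts is easy to check.
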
  

To introduce a variant of the above construction,   
	we introduce $R_k$ similarly as follows.
	\begin{definition}[$R_k$] 
	        Take a dyadic block with a single member 
		and divide it into two vertical parts, 
		and we obtain  $R_2 = L_2$.
		Then we divid the right half of  $R_2$  again vertically into half and we obtain  $R_3$.
		By repeating this process we obtain a dyadic block with $k$ vertical subblocks 
		and denote it by  $R_k$ (see Figure \ref{Fig:R_k}).
	\end{definition}
 	\begin{figure}[htp]
		 \begin{center} 	
	         \begin{tikzpicture}[baseline=-0.65ex, thick, scale=0.3]	
	         \draw (0, -5) to (0,5);
                 \draw (0, -5) to (10, -5);
                 \draw (10, -5) to (10,5);
                 \draw (10, 5) to (0, 5);               
                 \draw (5, -5) to (5,5); 
                 \draw (5, -7) node{$R_2 = L_2$};                                                  
                 \end{tikzpicture} \quad	
                    \begin{tikzpicture}[baseline=-0.65ex, thick, scale=0.3]	
	         \draw (0, -5) to (0,5);
                 \draw (0, -5) to (10, -5);
                 \draw (10, -5) to (10,5);
                 \draw (10, 5) to (0, 5);               
                 \draw (5, -5) to (5,5);
                 \draw (7.5, -5) to (7.5,5);      
                 \draw (5, -7) node{$R_3$};                                                              
                 \end{tikzpicture} \quad
                    \begin{tikzpicture}[baseline=-0.65ex, thick, scale=0.3]	
	          \draw (0, -5) to (0,5);
                 \draw (0, -5) to (10, -5);
                 \draw (10, -5) to (10,5);
                 \draw (10, 5) to (0, 5);               
                 \draw (5, -5) to (5,5);
                 \draw (7.5, -5) to (7.5,5);      
                 \draw (8.725, -5) to (8.725,5);    
                 \draw (5, -7) node{$R_4$};                  
                 \end{tikzpicture} \quad
                    \begin{tikzpicture}[baseline=-0.65ex, thick, scale=0.3]	
	      \draw (0, -5) to (0,5);
                 \draw (0, -5) to (10, -5);
                 \draw (10, -5) to (10,5);
                 \draw (10, 5) to (0, 5);               
                 \draw (5, -5) to (5,5);
                 \draw (7.5, -5) to (7.5,5);      
                 \draw (8.725, -5) to (8.725,5);    
                 \draw (9.35, -5) to (9.35,5);      
                 \draw (5, -7) node{$R_5$};              
                 \end{tikzpicture}
                   \begin{tikzpicture}[baseline=-0.65ex, thick, scale=0.3]
                 \draw (5, 0) node{$\cdots$};  
                 \end{tikzpicture}	
			\caption{Dyadic block  $R_k$}
			\label{Fig:R_k}
		\end{center} 	
    	\end{figure} 

\begin{proof}[Proof of Theorem \ref{Thm:EmbeddingQ}] 	
	To each  $k \geq 2$, 
	we have a choice to choose either  $L_k$  or  $R_k$.  
	It is easy to check that a different choice yield a different $\mathbb{Q}$.  
	The number of choices can be counted as a selection function 
	from  $\mathbb{N}_{\geq 2}$  to  $\{ L, \, R \}$  which has continuum cardinalily.  
\end{proof}

This result on the nature of infinite order elements in a group also have some indications 
on the geometric properties of the groups.  
Farley constructed in  \cite{F}  a ${\rm CAT}(0)$ cube complex
such that the group $V$ acts on properly. 
However, 
it is no longer true for  $n \geq 2$   by  Callard and Salo  \cite{CS}.  
We could provide another argument to show the same result,  

\begin{corollary}
       The group  $nV$  does not act on  ${\rm CAT}(0)$  
       cube complexes properly by isometries
       for  $n \geq 2$.  
\end{corollary}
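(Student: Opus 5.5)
The plan is to argue by contradiction, using the subgroup isomorphic to $\mathbb{Q}$ given by Proposition \ref{Prop:QEmbeds2V}, or equivalently the element $h$ of infinite order that has roots of every order $k!$. We play this against a known structural fact about isometries of finite dimensional ${\rm CAT}(0)$ cube complexes. Suppose $nV$ with $n \geq 2$ acts properly by isometries on a ${\rm CAT}(0)$ cube complex $K$.

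The first step is to reduce to a setting where semisimplicity holds. Since $nV$ is finitely generated (Brin), I would like $K$ to be finite dimensional. For an arbitrary cube complex this is not automatic, so I would first try to restrict the action. One option is to take the cubical convex hull of a finite orbit piece of the $\mathbb{Q}$ subgroup. The alternative is to note that the statement is usually meant for finite dimensional complexes, where the classical results apply. Granting finite dimensionality, Haglund's theorem says that every isometry of $K$, after passing to the cubical subdivision, is combinatorially semisimple. In particular an element $g$ acting without fixed points has a combinatorial axis, and it has a well defined positive translation length $\ell(g)$ that takes values in $\frac{1}{N}\mathbb{Z}$, with $N$ depending only on the dimension of $K$.

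The second step is to use properness. Properness forces $h$, being of infinite order, to act without bounded orbits, so it is hyperbolic and $\ell(h) > 0$. Each root $s_k$ also has infinite order, so $\ell(s_k) > 0$ as well. Translation length is homogeneous for semisimple isometries, so $\ell(s_k^{k!}) = k!\,\ell(s_k)$. This gives $\ell(s_k) = \ell(h)/k!$, which tends to $0$. That contradicts the discreteness $\ell(s_k) \geq 1/N$. Equivalently, in a finite dimensional ${\rm CAT}(0)$ cube complex, no infinite order element has roots of arbitrarily large order acting properly. So $\mathbb{Q}$ cannot act properly there, and $nV$ cannot act properly either.

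The main obstacle is the finite dimensionality and discreteness input in the first step. In infinite dimensional cube complexes (such as Farley's complex for $V$) translation lengths need not be discrete, and this argument alone would not rule out a proper action. I would therefore first check whether the intended statement assumes finite dimensionality. If it does not, I would try to exploit the fact that $\mathbb{Q}$ is divisible and torsion free, and show that any proper action of $\mathbb{Q}$ on a ${\rm CAT}(0)$ cube complex has an element with a bounded orbit. For this I would use the fact that each finitely generated subgroup of $\mathbb{Q}$ is cyclic and acts with orbits controlled by the hyperplanes that $h$ skewers, arguing that a root $s_k$ skewers at most $\frac{1}{k!}$ of the hyperplanes crossed by a fundamental domain of $h$. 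Making that hyperplane counting argument rigorous is where I expect the real work to lie.
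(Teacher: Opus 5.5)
Your core mechanism is the same as the paper's: an infinite-order element $h$ with roots $s_k$ of order $k!$ cannot coexist with a rigid translation length. The gap is that the argument you actually complete covers only finite-dimensional cube complexes. The corollary is meant for arbitrary CAT(0) cube complexes. The paper contrasts $n\ge 2$ with Farley's proper action of $V$, whose complex is infinite dimensional. Moreover, the finite-dimensional version could not distinguish $n\ge 2$ from $n=1$. Indeed, $V\le nV$ already contains $\mathbb{Z}^m$ for every $m$, and by semisimplicity and the Flat Torus Theorem this rules out proper actions on finite-dimensional CAT(0) cube complexes for every $n\ge 1$. Your proposed repairs do not close the gap. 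The cubical convex hull of a finite piece of an orbit is not invariant, and the hull of a whole orbit is invariant but need not be finite dimensional. The hyperplane-counting step is left unproved.

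The missing observation is that no dimension hypothesis is needed, because the Haglund theorem the paper invokes is combinatorial and holds for every CAT(0) cube complex $K$. Each automorphism either fixes a vertex of the cubical subdivision $K'$, or acts as a nontrivial translation on a bi-infinite combinatorial geodesic of $K'$.

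Work with the edge-path metric $d_1$ on $K'$ and the stable length $\ell_1(g)=\lim_m d_1(x,g^m x)/m$, which satisfies $\ell_1(g^m)=m\,\ell_1(g)$. Properness excludes the fixed-vertex alternative for $h$ and for every $s_k$, since all have infinite order and point stabilizers are finite. So each has a combinatorial axis, and $\ell_1$ equals the translation distance along it, a positive integer. Hence $\ell_1(h)=k!\,\ell_1(s_k)\ge k!$ for all $k$, which is absurd. This is exactly the paper's one-line proof, and it shows that your worry about non-discrete translation lengths in infinite dimension does not apply to $d_1$.

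Relatedly, your claim that $\ell(g)\in\frac1N\mathbb{Z}$ with $N$ depending on the dimension conflates two metrics. For $d_1$ on $K'$ the values are integers, independently of dimension. For the CAT(0) metric they need not lie in any $\frac1N\mathbb{Z}$: the diagonal translation of the square tiling of $\mathbb{R}^2$ has length $\sqrt2$. In finite dimension, the correct input for that metric would be Bridson's discreteness theorem for polyhedral complexes with finitely many shapes.
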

\begin{proof} 
	The isometries of a ${\rm CAT}(0)$ space with unbounded orbits act 
	as translations on some bi-infinite geodesics due to Haglund  \cite{Haglund}, 
	hence the order of the roots of an infinite order element 
	can not exceed the translation length.
\end{proof}

%
%



%
%


\medskip 
\begin{thebibliography}{99} 

\bibitem{BHM} 
James Belk, James Hyde and Francesco Matucci, 
Embedding  $\mathbb{Q}$  into a finitely presented group, 
Bill. Amer. Math. Soc., 59 (2022), 561-567.  

\bibitem{BZ}
James Belk and Matthew C. B. Zaremsky,
Twisted Brin-Thompson groups, 
Geom. Topol., 26 (2022), 1189-1223.

\bibitem{Brin}
Matthew G. Brin, Higher Dimensional Thompson Groups, 
Geometriae Dedicata 108 (2004), 164-192.  

\bibitem{BCST}
Jos\'{e} Burillo, Sean Cleary, Melanie Stein and Jennifer Taback, 
Combinatorial and metric properties of Thompson's group  $T$, 
Trans. Amer. Math. Soc., 353 (2001), 1677-1689.  

\bibitem{BCR}
Jos\'{e} Burillo, 
Sean Cleary and Claas E R\"{o}ver, 
Obstructions for subgroups of Thompson's group $V$, 
London Math. Soc. Lecture Note Ser. vol 444 (2018), 1-4.  

\bibitem{CFP}
James W. Cannon,  William J. Floyd, Walter R. Parry, 
Introductory Notes on Richard Thompson's groups, 
l'Enseignement Math\'ematique, 42 (1996), 215-256.  

\bibitem{CS} 
Antonin Callard, Ville Salo,
Distortion element in the automorphism group of a full shift, 
Ergod. Th. Dynam. Sys., 
44 (2024), 1757-1817. 

\bibitem{F} 
Daniel. S. Farley,
Finiteness and ${\rm CAT(0)}$ properties of diagram groups,
Topology, 42 (2003), 1065-1082. 

\bibitem{Grigorchuk} 
Rostislav Grigorchuk,
On Burnside's problem on periodic groups, 
(Russian) Funktsionalyi Analiz i ego Prilozheniya, 14 (1980), 53-54.  

\bibitem{Higman} 
Graham Higman, 
Finitely presented infinite simple groups, 
Australian National University, 
Notes on Pure Mathematics, 8 (1974).  

\bibitem{Haglund} 
Fr\'ed\'eric Haglund, 
Isometries of CAT(0) cube complexes are semi-simple, 
Ann. Math, Qu\'ebec. 47 (2023), 249--261. 
%
\bibitem{K}
V. D. Mazurov and E. I. Khukhro, 
The Kourovka notebook: Unsolved Problems in Group Theory, 
the Kourovka Notebook, arXiv: 1401.0300v39, (2026).

\bibitem{R}
Claas E, R\"{o}ver, 
Subgroups of finitely Presented Simple Groups,
PhD Thesis, University of Oxford, UK (1999).

\bibitem{Rover}
Claas E, R\"{o}ver,  
Constructing finitely presented simple groups 
that contain Grigorchuk groups, 
J. Algebra, 220 (1999), 284-313.    

\bibitem{Sheng} 
Xiaobing Sheng, 
Some obstructions on the subgroups of the Brin-Thompson groups and a selection of 
twisted Brin-Thompson groups, 
Unpublished manuscript  (2022). 

\end{thebibliography}
\end{document}